\date{}
\newtheorem{thm}{\bf Theorem}[section]
\newtheorem{defn}[thm]{\bf Definition}
\newtheorem{rem}[thm]{\bf Remark}
\begin{document}
\title[Eulerian character degree graphs of solvable groups]{EULERIAN CHARACTER DEGREE GRAPHS OF SOLVABLE GROUPS}
 
\author [G. Sivanesan]{G. Sivanesan}
\address{Department of Mathematics, Government College of Engineering, Salem 636011, Tamil Nadu, India, ORCID:0000-0001-7153-960X.}
\email{sivanesan@gcesalem.edu.in}

\author [C. Selvaraj]{C. Selvaraj}
\address{Department of Mathematics, Periyar University, Salem 636011, Tamil Nadu, India, 0000-0002-4050-3177}
\email{selvavlr@yahoo.com}

\author[T. Tamizh Chelvam]{T. Tamizh Chelvam}
\address{Department of Mathematics, Manonmaniam Sundaranar University, 
Tirunelveli 627 012, Tamil Nadu, India, ORCID:0000-0002-1878-7847.}
\email{tamche59@gmail.com}

\keywords{character graph; Eulerian graph; regular graph; finite solvable group}
\subjclass[2000]{05C45, 20C15, 05C25}
 
\begin{abstract} Let $G$  be a finite group,  let $Irr(G)$ be the set of all complex irreducible characters of $G$ and let $cd(G)$ be the set of all degrees of characters in $Irr(G).$  Let $\rho(G)$ be the set of primes that divide degrees in $cd(G).$ The character degree graph $\Delta(G)$ of $G$ is the simple undirected graph with vertex set $\rho(G)$ and in which two distinct vertices $p$ and $q$ are adjacent  if there exists a  character degree $r \in cd(G)$ such that $r$ is divisible by the product $pq.$ In this paper, we obtain a necessary condition for the character degree graph $\Delta(G)$ of a finite solvable group $G$ to be Eulerian. Morresi Zuccari [C. P. Morresi  Zuccari, Regular character degree graphs, \textit{J. Algebra} {\bf 411}  (2014), 215--224.] proved that if $\Delta(G)$ is a non-complete and regular character degree graph, then $\Delta(G)$ is $n-2$ regular. Now, we prove the converse, that is, every $n-2$ regular graph with $n\geq 4$ and $n$ is even, is a character degree graph of some finite solvable group $G$. 
\end{abstract}
\date{}
\maketitle
 
\section{Introduction}  Throughout this paper, $G$ will be a finite solvable group with identity $1.$ We denote the set of complex irreducible characters of $G$ by $Irr(G).$ Here $cd(G) =\{\chi(1) \mid \chi\in Irr(G)\}$ is the set of all distinct degrees of irreducible characters in $Irr(G).$ Let $\rho(G)$ be the set of all primes that divide degrees in $cd(G).$  There is a vast literature  devoted to study of ways through which one can associate a graph with a group and that literature can be used  for investigating the algebraic structure of groups  using graph theoretical properties of  associated graphs. One of these graphs is the character degree graph $\Delta(G)$ of $G.$  In fact, $\Delta(G)$ is an undirected simple graph with vertex set $\rho(G)$ in which $p,q\in\rho(G)$ are joined by an edge if there exists a character degree $\chi(1)\in cd(G)$ which is divisible by $pq.$  This graph was first defined in  \cite{Manz2} and studied by many authors (see \cite{Lewis5}). When $G$ is a solvable group, some interesting results on the character graph of $G$ have been obtained. Actually, Manz\cite{Manz1} proved that $\Delta(G)$ has at most two connected components. Also, Manz et al.\cite{Manz3} have proved that diameter of $\Delta(G)$ is at most $3$. Mahdi Ebrahimi et al.\cite{EI} proved that the character graph $\Delta(G)$ of a solvable group $G$ is Hamiltonian if and only if $\Delta(G)$ is a block with at least $3$ vertices. Motivated by these studies on character degree graphs of solvable groups, we obtain a  necessary condition for the character degree graph $\Delta(G)$ of a solvable group $G$ to be Eulerian. In fact, we obtain characterizations for the character degree graph to be Eulerian through possibilities for the diameter of $\Delta(G).$

\section{Preliminaries}
In this section, we present some preliminary results which are used in the paper. All graphs are assumed to be simple, undirected and finite. Let $\Gamma$ be a graph with vertex set $V(\Gamma)$ and edge set $E(\Gamma)$. If $\Gamma$ is connected, then the distance $d(u,v)$ between two distinct vertices $u,v\in V(\Gamma)$ is the length of the shortest path between them. The supremum of all distance between possible pairs of distinct vertices is known as the diameter of the graph. A complete graph with $n$ vertices in which any of the two distinct vertices are adjacent is denoted by $ K_n.$  A cycle with $n$ vertices is denoted by $C_n.$ The vertex connectivity $k(\Gamma)$ of $\Gamma$ is defined to be the minimum number of vertices whose removal from $\Gamma$ results in a disconnected subgraph of $\Gamma.$ A cut vertex $v$ of a graph $\Gamma$ is a vertex  such that the number of connected components of $\Gamma -v$ is more than the number of connected components of $\Gamma$. Similar definition is applicable in the case of cut edge of graph $\Gamma.$  A maximal connected subgraph without a cut vertex is called a block. By their maximality, different blocks of $\Gamma$ overlap in at most one vertex, which is then a cut vertex. Thus, every edge of $\Gamma$ lies in a unique block and $\Gamma$ is the union of its blocks. The degree of a vertex $v$ in $\Gamma$ is the number of edges incident with $v$ and the same is denoted by $ d(v)$ or deg$v$. A graph $\Gamma$ is called $k$-regular, if the degree of each vertex is $k$. 
A graph $\Gamma$ is said to be Eulerian if it contains a cycle containing all vertices of $\Gamma.$ The famous characterization~\cite[Theorem 3.5]{BM} states that a connected graph is an Eulerian graph if and only if all the vertices are of even degree.

We will take into account the following well known facts concerning character degree graphs and they are needed in the next section.

\begin{rem}\normalfont\label{rem2.1} For results regarding $\Delta(G),$ we start with P\'{a}lfy's three prime theorem on the character degree graph of solvable groups. P\'{a}lfy theorem~\cite[Theorem, pp. 62]{PP} states that given a solvable group $G$ and any three distinct vertices of $\Delta(G)$, there exists an edge incident with the other two vertices. On applying P\'{a}lfy's theorem, $\Delta(G)$  has at most two connected components. 
\end{rem}
\begin{rem}\normalfont\label{rem2.2} Let $G$ be a solvable group. Then $diam(\Delta(G))\leq 3$\cite[Theorem 3.2]{Manz3}. Assume that the diameter $diam(\Delta(G))=3$ and let $r,s\in\rho(G)$ be two distinct vertices in $\Delta(G)$ such that distance $d(r,s)=3.$  When $\Delta(G)$ has exactly diameter $3$ and contains at least $5$ vertices,  Lewis~\cite[pp. 5487]{Lewis2} proved that the vertex set $\rho(G)$ of $\Delta(G)$ can be partitioned into $\rho_1,\rho_2,\rho_3$ and $\rho_4.$ Actually $\rho_4$ is the set of all vertices of $\Delta(G)$ which are at distance $3$ from the vertex $r$ and so we have that $s\in \rho_4,$ $\rho_3$ is  the set of all vertices of $\Delta(G)$ which are distance $2$ from the vertex $r,$ $\rho_2$ is the set of all vertices that are adjacent to vertex $r$ and adjacent to some prime in $\rho_3$ and $\rho_1$ consists of $r,$ the set of all vertices which are adjacent to $r$ and not adjacent to any vertex in $\rho_3.$  This implies that, no vertex in $\rho_1$ is adjacent to any vertex in $\rho_3\cup\rho_4$ and no vertex in $\rho_4$ is adjacent to any vertex in $\rho_1\cup\rho_2,$ every vertex in $\rho_2$ is adjacent to some vertex in $\rho_3$ and vice versa, and $\rho_1\cup\rho_2$ and $\rho_3\cup\rho_4$ both induce complete subgraphs of $\Delta(G).$
\end{rem}
\begin{rem}\normalfont\label{rem2.3} 
Huppert\cite[pp. 25]{HU} listed all possible graphs $\Delta(G)$ for solvable groups $G$ with at most $4$ vertices.   In fact, every graph with $3$ or few vertices that satisfies P\'{a}lfy's condition occurs as $\Delta(G)$ for some solvable group $G$\cite[pp. 184]{Lewis4}. 
\end{rem}
\begin{thm}\cite[Lemma 2.7]{EI}\label{2.2} Let $G$ be a group with $|\rho(G)|\geq 3.$  If $\Delta(G)$ is not a block and the diameter of $\Delta(G)$ is at most $2$, then each block of $\Delta(G)$ is a complete graph.
\end{thm}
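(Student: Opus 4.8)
The plan is to deduce the global shape of $\Delta(G)$ from the two hypotheses and then read off the blocks. First I would note that $diam(\Delta(G))\le 2$ forces $\Delta(G)$ to be connected, while the assumption that $\Delta(G)$ is not a block means that this connected graph on $|\rho(G)|\ge 3$ vertices must possess a cut vertex. So fix a cut vertex $c$ and let $C_1,\dots,C_k$ (with $k\ge 2$) denote the connected components of $\Delta(G)-c$. Every path joining a vertex of $C_i$ to a vertex of $C_j$ with $i\ne j$ must pass through $c$; consequently such a cross-component pair is non-adjacent, and any common neighbour of the two can only be $c$ itself (any other candidate would lie in some $C_m$ and its two incident edges would already reconnect two distinct components in $\Delta(G)-c$).

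The crux of the argument is to show that $c$ is a \emph{universal} vertex. Given any $x\in C_i$, choose $y$ in some other component $C_j$, which is possible since $k\ge 2$. As $x$ and $y$ are non-adjacent and $diam(\Delta(G))\le 2$, they admit a common neighbour, which by the previous remark must be $c$; hence $x\sim c$. Since $x$ was arbitrary, $c$ is adjacent to every other vertex of $\Delta(G)$. Next I would invoke P\'{a}lfy's three-prime condition from Remark~\ref{rem2.1}, namely that among any three vertices at least two are adjacent. Applying this to two vertices $x,z$ lying in the same component $C_i$ together with a vertex $y$ chosen from a different component, the edges $xy$ and $zy$ are both absent, which forces $x\sim z$. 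Therefore each $C_i$ induces a complete subgraph of $\Delta(G)$.

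Finally I would identify the blocks. Because $c$ is universal and each $C_i$ is a clique, the set $\{c\}\cup C_i$ induces a complete subgraph; and since $c$ separates $C_i$ from the remainder of $\Delta(G)$, no $2$-connected subgraph can properly contain it. Hence the blocks of $\Delta(G)$ are exactly the subgraphs induced by $\{c\}\cup C_i$ (a bridge $K_2$ when $|C_i|=1$), each of which is complete, as required. I expect the main obstacle to be the universality step: one must combine the diameter-$2$ hypothesis with the cut-vertex property to guarantee that cross-component pairs can be reconnected only through $c$, which is precisely what pins $c$ down as their common neighbour and, in turn, as essentially the unique cut vertex of $\Delta(G)$.
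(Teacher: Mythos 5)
Your argument is correct. Note first that the paper offers no proof of this statement: it is imported verbatim as Lemma~2.7 of the cited reference \cite{EI}, so there is nothing internal to compare against. Your derivation is the natural self-contained one, and every step checks out: the diameter hypothesis makes the graph connected, non-blockness yields a cut vertex $c$, the diameter-$2$ condition together with the observation that a common neighbour of vertices in different components of $\Delta(G)-c$ can only be $c$ forces $c$ to be universal, P\'{a}lfy's three-prime condition (Remark~\ref{rem2.1}) makes each component of $\Delta(G)-c$ a clique, and the block decomposition then falls out as $\{c\}\cup C_i$. Two small points worth flagging: the statement as reproduced says ``group'' but your use of P\'{a}lfy's theorem requires $G$ solvable, which is the standing hypothesis of the paper and of the source lemma, so you should say so explicitly; and your opening claim that diameter at most $2$ forces connectivity is a convention (disconnected graphs having infinite diameter) that deserves one sentence, although even in the disconnected case P\'{a}lfy's condition would make both components complete and the conclusion would still hold.
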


\begin{thm} \cite[Theorem 5]{Zha}\label{2.3}  The graph with four vertices in Figure 1 is not the character degree graph of a solvable group.
	
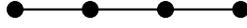
\begin{figure}[ht]
\centering
\begin{tikzpicture}
\draw[fill=black] (0,0) circle (3pt);
 \draw[fill=black] (1,0) circle (3pt);
 \draw[fill=black] (2,0) circle (3pt);
 \draw[fill=black] (3,0) circle (3pt);
\draw[thick] (0,0) --(1,0) --(2,0) --(3,0);
\end{tikzpicture}
 \caption{Graph with four vertices}
\end{figure}
\end{thm}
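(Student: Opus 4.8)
The plan is to argue by contradiction. Suppose $G$ is a finite solvable group whose character degree graph $\Delta(G)$ is the four-vertex path $p_1-p_2-p_3-p_4$ of Figure~1. The first point to record is that this graph survives P\'alfy's three prime theorem: its only non-adjacent pairs are $\{p_1,p_3\}$, $\{p_1,p_4\}$ and $\{p_2,p_4\}$, so every three vertices still span at least one edge --- equivalently, the complement of $P_4$, which is again a four-vertex path, is triangle-free. Thus the condition of Remark~\ref{rem2.1} holds, and a finer, genuinely group-theoretic obstruction is required. The one structural fact that is immediate is $d(p_1,p_4)=3$, so by Remark~\ref{rem2.2} the group $G$ realises the extremal situation $diam(\Delta(G))=3$.

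Next I would form the distance partition of $\rho(G)$ relative to $r=p_1$ as in Remark~\ref{rem2.2}: here $\rho_4=\{p_4\}$, $\rho_3=\{p_3\}$, $\rho_2=\{p_2\}$ and $\rho_1=\{p_1\}$, so $P_4$ corresponds to the degenerate case in which all four parts are singletons. In particular the induced subgraph on $\rho_1\cup\rho_3\cup\rho_4=\{p_1,p_3,p_4\}$ splits as the isolated vertex $p_1$ together with the edge $p_3p_4$, and symmetrically on $\{p_1,p_2,p_4\}$; the single edge $p_2p_3$ is then the only bridge between the two ends. The task is to show that a solvable group cannot compress a diameter-$3$ graph onto just four vertices in this way.

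To produce the contradiction I would take $G$ to be a counterexample of least order and analyse the action of $G$ on its Fitting subgroup $F=F(G)$, using $C_G(F)\le F$ together with Clifford theory to translate each adjacency and non-adjacency of $\Delta(G)$ into a statement about orbit sizes and character stabilisers on the chief factors of $F$. The non-adjacencies $p_1\not\sim p_3$, $p_1\not\sim p_4$ and $p_2\not\sim p_4$ force several pairs of primes never to divide a common degree, which tightly constrains how the relevant Hall subgroups may act. The hard part, and the technical heart of the argument, is to show that the single intermediate edge $p_2p_3$ cannot carry a genuine distance-$3$ separation of $p_1$ from $p_4$: the degree arithmetic imposed by solvability is incompatible with $p_2$ being joined to both $p_1$ and $p_3$ while $p_1$ remains non-adjacent to $p_3$ and $p_4$. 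Equivalently, one must prove that no diameter-$3$ character degree graph of a solvable group has only four vertices, and this direct degree-counting and chief-factor analysis, peculiar to the four-vertex case, lies entirely beyond the general facts collected in Section~2.
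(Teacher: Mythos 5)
The paper itself offers no proof of this statement: it is imported verbatim as Theorem 5 of Zhang's article \cite{Zha} and used as a black box, so there is no internal argument to measure you against. Judged on its own terms, your proposal sets the stage correctly --- the path $p_1-p_2-p_3-p_4$ does satisfy P\'alfy's three-prime condition (every triple of vertices spans an edge), it forces $diam(\Delta(G))=3$, and the Lewis partition relative to $p_1$ degenerates to four singletons $\rho_i=\{p_i\}$ --- but none of this is the obstruction; it only confirms that the graph passes every general test collected in Section~2 and that something finer is needed.

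That finer something is precisely what your proposal does not supply. The paragraph beginning ``To produce the contradiction'' announces a minimal counterexample, an appeal to $C_G(F(G))\le F(G)$, Clifford theory, and ``degree arithmetic,'' and then asserts that these are ``incompatible'' with $p_2$ being adjacent to $p_1$ and $p_3$ while $p_1$ is isolated from $\{p_3,p_4\}$ --- but no actual incompatibility is derived. Your closing sentence, that ``one must prove that no diameter-$3$ character degree graph of a solvable group has only four vertices,'' is not a reduction: since a connected four-vertex graph of diameter $3$ is exactly $P_4$, that sentence is a restatement of the theorem being proved. Note also that you cannot shortcut this via Remark~\ref{rem2.2}, which only describes the structure of diameter-$3$ graphs with at least five vertices and says nothing about excluding four. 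So the technical heart of the argument --- the part Zhang's paper actually carries out --- is named but left entirely unexecuted, and the proposal as written does not constitute a proof.
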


\begin{thm}\cite[Theorem 1.1]{Lewis6}\label{2.4}  Let $G$ be a solvable group. Then $\Delta(G)$ has at most one cut vertex.
\end{thm}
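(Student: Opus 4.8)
The plan is to argue by contradiction, pushing the purely combinatorial consequences of P\'{a}lfy's condition and the diameter bound as far as they go, and then to isolate the precise point at which these are no longer sufficient. First I would dispose of the disconnected case. If $\Delta(G)$ is disconnected, then P\'{a}lfy's theorem (Remark~\ref{rem2.1}) forces any three vertices to span an edge, so $\Delta(G)$ contains no three pairwise non-adjacent vertices; applying this to one vertex of one component together with two vertices of another shows that each component is a complete graph. A complete graph has no cut vertex, so a disconnected $\Delta(G)$ has none, and we may assume $\Delta(G)$ is connected.

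Next I would record the local structure at a cut vertex. If $v$ is a cut vertex, then distinct components of $\Delta(G)-v$ are joined by no edges, so a vertex taken from each of three distinct components would give a pairwise non-adjacent triple, contradicting P\'{a}lfy; hence $\Delta(G)-v$ has exactly two components. Applying the same principle to two vertices of one component and a vertex of the other shows each component is complete. Thus $\Delta(G)-v=A\sqcup B$ with $A,B$ cliques, no edges between them, and $v$ adjacent to at least one vertex of each. Now suppose for contradiction that $u\neq v$ is a second cut vertex. Choosing $x$ in a component of $\Delta(G)-u$ not containing $v$ and $y$ in a component of $\Delta(G)-v$ not containing $u$, one checks that every $x$--$y$ path must pass through both $u$ and $v$, while $x\not\sim v$, $y\not\sim u$ and $x\not\sim y$. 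P\'{a}lfy's condition applied to $\{x,y,v\}$ and to $\{x,y,u\}$ then forces $y\sim v$ and $x\sim u$, and since $\mathrm{diam}(\Delta(G))\le 3$ (Remark~\ref{rem2.2}) the only admissible route is $x$--$u$--$v$--$y$; hence $u\sim v$ and $d(x,y)=3$. In particular two cut vertices force $\mathrm{diam}(\Delta(G))=3$, so the case $\mathrm{diam}(\Delta(G))\le 2$ admits at most one cut vertex immediately (consistent with the complete-block structure of Theorem~\ref{2.2}). If $\Delta(G)$ has exactly four vertices, the induced graph on $\{x,u,v,y\}$ is precisely the path $P_4$, which is excluded by Theorem~\ref{2.3}.

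It then remains to treat $\mathrm{diam}(\Delta(G))=3$ with at least five vertices, where I would invoke the layering of Remark~\ref{rem2.2} with $x$ as the root $r$. Since $d(x,u)=1$, $d(x,v)=2$ and $u\sim v$, the cut vertex $u$ lies in $\rho_2$ and $v$ lies in $\rho_3$. As $\rho_1\cup\rho_2$ and $\rho_3\cup\rho_4$ are complete and the only cross edges run between $\rho_2$ and $\rho_3$, tracing which single-vertex removals disconnect the graph shows that $u$ can be a cut vertex only if $|\rho_2|=1$ and $v$ only if $|\rho_3|=1$; no vertex of $\rho_1$ or $\rho_4$ can be a cut vertex. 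Hence two cut vertices occur exactly when $|\rho_2|=|\rho_3|=1$.

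The main obstacle is precisely this configuration $|\rho_2|=|\rho_3|=1$. Such a graph still satisfies P\'{a}lfy's condition, the bound $\mathrm{diam}(\Delta(G))\le 3$, and every incidence relation of Remark~\ref{rem2.2}, so it cannot be eliminated by the combinatorial results collected in this section; for instance the five-vertex graph with edges $a$--$u$, $u$--$v$, $v$--$b_1$, $v$--$b_2$, $b_1$--$b_2$ passes all of these tests yet genuinely has the two cut vertices $u$ and $v$. Ruling it out therefore requires the group-theoretic structure theory of solvable groups whose character degree graph has diameter three, in particular arithmetic lower bounds on $|\rho_2|$ and $|\rho_3|$ that forbid a single connecting prime on either side. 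This is the content supplied by \cite{Lewis6}, and it is the step I expect to be by far the hardest, since it lies genuinely beyond what P\'{a}lfy's condition and the diameter bound alone can deliver.
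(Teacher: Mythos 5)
This statement is not proved in the paper at all: it is quoted verbatim as Theorem~1.1 of \cite{Lewis6}, so there is no internal argument to compare yours against. Judged as a standalone proof, your proposal has a genuine gap, and you have identified it yourself. The combinatorial reduction is sound: Pálfy's condition handles the disconnected case and shows that deleting a cut vertex leaves exactly two complete components; the diameter bound together with the exclusion of $P_4$ (Theorem~\ref{2.3}) forces two cut vertices $u,v$ to produce a diameter-$3$ graph in which, under the Lewis partition rooted at your vertex $x$, one has $u\in\rho_2$, $v\in\rho_3$, and $|\rho_2|=|\rho_3|=1$. But ruling out that final configuration is not a residual technicality --- it \emph{is} the theorem. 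Everything up to that point follows from Pálfy plus the diameter bound, which were known long before \cite{Lewis6}; the new content of Lewis--Meng's result is precisely the arithmetic/group-theoretic argument showing a solvable group cannot realize a diameter-$3$ graph with a single connecting prime on each side. Deferring that step to the citation means the proposal proves nothing beyond what was already classical.

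Within the toolkit this paper actually assembles, the missing case is exactly what Theorem~\ref{2.5} (Lemma~2.1 of \cite{Lewis6}) supplies: in the diameter-$3$ case, $1$-connectedness forces $|\rho_2|=1$ and the unique prime of $\rho_2$ to be the \emph{unique} cut vertex, which is incompatible with $v\in\rho_3$ also being one. So if you are permitted to quote the paper's preliminaries, you should close the argument by invoking Theorem~\ref{2.5} explicitly rather than leaving the case open --- though note that this is circular in spirit, since that lemma comes from the same source and its statement already ends with ``$\Delta(G)$ has at most one cut vertex.'' If you are not permitted to quote it, the proof is incomplete at exactly the point you flagged.
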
	

\begin{thm}\cite[Theorem A]{MZ}\label{2.4.1} If $\Delta(G)$ is a non-complete and regular character degree graph of a finite solvable group $G$ with $n$ vertices, then $\Gamma(G)$ is $n-2$ is a regular graph.
\end{thm}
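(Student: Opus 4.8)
The plan is to pass to the complement. Writing $H=\overline{\Delta(G)}$, P\'alfy's three prime theorem (Remark~\ref{rem2.1}) says precisely that among any three vertices of $\Delta(G)$ at least one pair is joined, i.e.\ $H$ is triangle-free. If $\Delta(G)$ is $k$-regular on $n$ vertices then $H$ is $d$-regular with $d=n-1-k$, and non-completeness gives $d\geq 1$. The whole statement is equivalent to showing $d=1$, i.e.\ that $H$ is a perfect matching, so that $\Delta(G)$ is the complete multipartite graph $K_{2,\ldots,2}$ with $n/2$ parts of size two. I would split the argument according to the diameter and connectivity of $\Delta(G)$, using $diam(\Delta(G))\leq 3$ (Remark~\ref{rem2.2}).

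First I would eliminate $diam(\Delta(G))=3$ by a degree count on the Lewis partition $\rho_1\cup\rho_2\cup\rho_3\cup\rho_4$ of Remark~\ref{rem2.2}. Since $\rho_1\cup\rho_2$ induces a complete subgraph and no vertex of $\rho_1$ meets $\rho_3\cup\rho_4$, every $r\in\rho_1$ has degree exactly $|\rho_1|+|\rho_2|-1$; on the other hand a vertex $v\in\rho_2$ is joined to all of $(\rho_1\cup\rho_2)\setminus\{v\}$ and, by the defining property of the partition, to at least one vertex of $\rho_3$, so $\deg v\geq |\rho_1|+|\rho_2|$. As $r\in\rho_1$ and $v\in\rho_2$ both exist whenever $d(r,s)=3$, these degrees differ and $\Delta(G)$ cannot be regular; the only diameter-$3$ possibility on fewer vertices is the path $P_4$, which is non-regular and in any case excluded by Theorem~\ref{2.3}. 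Hence a non-complete regular graph has diameter at most $2$.

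Next I would treat the disconnected case. By Remark~\ref{rem2.1} there are exactly two components $A,B$, and P\'alfy's condition forces each to be complete: if $|A|\geq 2$, then for $a_1,a_2\in A$ and any $b\in B$ the only admissible edge among the three is $a_1a_2$, so $A$ is complete, and likewise $B$. Regularity then gives $|A|=|B|$, i.e.\ $\Delta(G)=K_{n/2}\sqcup K_{n/2}$, which is $(n/2-1)$-regular. Here I would invoke the known restriction on disconnected character degree graphs of solvable groups (P\'alfy), which forbids two complete components of equal size $\geq 2$; this leaves only $n=2$, where the graph is $(n-2)$-regular trivially.

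This reduces everything to the decisive case: $\Delta(G)$ connected with $diam(\Delta(G))=2$, where $H$ is triangle-free and $d$-regular and I must rule out $d\geq 2$. This is the main obstacle, and it cannot be closed by the listed combinatorial constraints alone, since the $5$-cycle $C_5$ is triangle-free, $2$-regular and satisfies P\'alfy's condition yet is not a character degree graph; only group theory can exclude such graphs. Here I would pass to a counterexample $G$ of minimal order and analyze the character degrees across a chief factor $N\trianglelefteq G$, comparing $\Delta(G)$ with $\Delta(G/N)$ and using Clifford and Gallagher theory to control which products $pq$ occur as degrees. The aim would be to convert a complement-vertex of $H$-degree $\geq 2$ (equivalently, a path of length two in $H$) into either three mutually non-adjacent vertices of $\Delta(G)$, contradicting P\'alfy, or an induced $P_4$, contradicting Theorem~\ref{2.3}. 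I expect this final step, turning the local structure of $H$ into a genuine group-theoretic contradiction, to be the hard part of the proof.
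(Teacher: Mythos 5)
The paper does not actually prove this statement: Theorem~\ref{2.4.1} is quoted from Morresi Zuccari's article \cite{MZ} and used as a black box, so there is no internal proof to compare against. Judged on its own terms, your proposal gets the reformulation right --- by P\'alfy's theorem the complement $H=\overline{\Delta(G)}$ is triangle-free, and the claim is exactly that $H$ is a perfect matching --- and your two easy cases are sound. The degree count on the Lewis partition does show a diameter-$3$ graph on at least $5$ vertices cannot be regular (a vertex of $\rho_1$ has degree $|\rho_1|+|\rho_2|-1$ while a vertex of $\rho_2$ has degree at least $|\rho_1|+|\rho_2|$), with $P_4$ disposed of by Theorem~\ref{2.3}; and in the disconnected case P\'alfy's condition forces two complete components, which regularity makes equal in size, and P\'alfy's bound $b\geq 2^{a}-1$ on the component sizes then forbids equal sizes $\geq 2$. (Note that this last bound is not among the facts recorded in the paper, so you are importing an external result there.)

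The genuine gap is exactly where you flag it: the connected, diameter-$2$ case with complement degree $d\geq 2$ is not an edge case but the entire content of the theorem, and your proposal stops at announcing a strategy (minimal counterexample, chief factors, Clifford and Gallagher theory) without executing any step of it. As your own $C_5$ example shows, no combination of the combinatorial constraints available in this paper --- P\'alfy's three-prime condition, $diam\leq 3$, the exclusion of $P_4$, the cut-vertex bound --- can close this case, since $C_5$ and many other triangle-free-complement regular graphs satisfy all of them. Converting a path of length two in $H$ into a group-theoretic contradiction is precisely the hard analysis of normal structure and character degrees that occupies \cite{MZ}, and nothing in the sketch supplies it. So what you have is a correct reduction to the decisive case together with a plausible but unsubstantiated plan for that case, not a proof.
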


\begin{thm}\cite[Lemma 2.1]{Lewis6}\label{2.5}  Let $G$ be a solvable group and assume that $\Delta(G)$ has diameter $3$. Then $G$ is $1$-connected if and only if $|\rho_2|=1$ in the diameter $3$ partition of  $\rho(G)$. In this case, if $p$ is the unique prime in $\rho_2$, then $p$ is also the unique cut vertex for $\Delta(G)$. In particular, $\Delta(G)$ has at most one cut vertex.
\end{thm}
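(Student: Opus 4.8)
My plan is to read everything off the diameter-$3$ partition recalled in Remark~\ref{rem2.2}. That remark shows $\Delta(G)$ is the union of two complete subgraphs $A=\rho_1\cup\rho_2$ and $B=\rho_3\cup\rho_4$ whose only connecting edges join a vertex of $\rho_2$ to a vertex of $\rho_3$, and that every vertex of $\rho_2$ has a neighbour in $\rho_3$ and conversely. Since the diameter is $3$, a shortest path $r=v_0\sim v_1\sim v_2\sim v_3=s$ puts $v_1$ in $\rho_2$ and $v_2$ in $\rho_3$, so all four layers are non-empty, with $r\in\rho_1$ and $s\in\rho_4$. Thus $\Delta(G)$ is a ``barbell'': two cliques linked only through the middle layers $\rho_2$ and $\rho_3$.

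For the forward implication I would suppose $|\rho_2|=1$, say $\rho_2=\{p\}$. Then $p$ is the unique vertex of $A$ having any neighbour in $B$, so deleting $p$ destroys every edge between the non-empty sets $\rho_1$ and $\rho_3\cup\rho_4$; since Remark~\ref{rem2.2} already forbids edges between $\rho_1$ and $\rho_3\cup\rho_4$, the graph $\Delta(G)-p$ is disconnected. Hence $p$ is a cut vertex and $\Delta(G)$ is $1$-connected, while Theorem~\ref{2.4} forces $p$ to be the only one.

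For the converse I would locate a cut vertex $v$ layer by layer. Deleting a vertex of $\rho_1$ (or, symmetrically, of $\rho_4$) leaves all of $\rho_2$ (resp.\ $\rho_3$) intact, so the cross edges survive and the graph stays connected; hence $v\in\rho_2\cup\rho_3$. If $v\in\rho_2$ with $|\rho_2|\ge2$, some other vertex of $\rho_2$ still carries a cross edge to $\rho_3$ and no disconnection occurs, so $v\in\rho_2$ forces $|\rho_2|=1$. The one remaining possibility, $v\in\rho_3$ with $|\rho_3|=1$, is exactly the previous case read from the opposite end of the geodesic: interchanging $r$ and $s$ turns the partition $(\rho_1,\rho_2,\rho_3,\rho_4)$ into $(\rho_4,\rho_3,\rho_2,\rho_1)$, so in that labelling $v$ becomes the unique vertex of the new $\rho_2$. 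The step I expect to be the real obstacle is precisely this last one: making the $r\leftrightarrow s$ symmetry rigorous, so that the cut vertex is always matched with the singleton middle layer called $\rho_2$, and checking via Theorem~\ref{2.4} that the two middle layers cannot both be singletons, since that would produce two cut vertices. Once this is settled, the uniqueness of the cut vertex comes for free from Theorem~\ref{2.4}.
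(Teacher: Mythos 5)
The paper offers no proof of this statement: it is quoted directly from Lewis and Meng \cite[Lemma 2.1]{Lewis6}, so your argument can only be measured against the combinatorial data the paper records in Remark~\ref{rem2.2} and the other imported results. Most of what you write is correct on those terms. The forward implication ($\rho_2=\{p\}$ forces $p$ to be a cut vertex separating $\rho_1$ from $\rho_3\cup\rho_4$) is sound, and so is your localization of any cut vertex to $\rho_2\cup\rho_3$, with $|\rho_2|=1$ (resp.\ $|\rho_3|=1$) forced when it lies in $\rho_2$ (resp.\ $\rho_3$). The reversal symmetry you flag as the obstacle is in fact rigorous: taking $s$ as base vertex, the properties in Remark~\ref{rem2.2} show the new partition is exactly $(\rho_4,\rho_3,\rho_2,\rho_1)$.

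The genuine gap is in how you close the $v\in\rho_3$ case. Your proposed fix --- invoke Theorem~\ref{2.4} to show the two middle layers cannot both be singletons --- only excludes $|\rho_2|=|\rho_3|=1$. It says nothing about the configuration $|\rho_3|=1$ with $|\rho_2|\geq 2$, which has exactly one cut vertex (the unique vertex of $\rho_3$) and is therefore perfectly consistent with Theorem~\ref{2.4} and with every property listed in Remark~\ref{rem2.2}; in that configuration the graph is $1$-connected but $|\rho_2|\neq 1$, so the stated equivalence fails for the given labelling. Either one reads the lemma as permitting the partition to be reversed so that the cut vertex sits in $\rho_2$ (this is how the paper uses it in Remark~\ref{rm3.3}), in which case your relabelling argument finishes the proof of that weaker, labelling-flexible statement; or one must show that $|\rho_3|=1<|\rho_2|$ never occurs for a solvable group, and that is genuinely group-theoretic, not derivable from the partition combinatorics plus the at-most-one-cut-vertex theorem. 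A secondary caveat: leaning on Theorem~\ref{2.4} is circular against the source, since that is Theorem 1.1 of the very Lewis--Meng paper in which the present statement is Lemma 2.1 and is used to prove it.
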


\begin{thm}[~\cite{Lewis4} Lemma $2.1$]\label{2.6}  Let $p, q_1, . . . , q_n$ be distinct primes so that $p$ is odd. Then there is a
solvable group $G$ of Fitting height $2$ so that $\Delta(G)$ has two connected components: $\{p\}$ and $\{q_1, . . . , q_n\}$.
\end{thm}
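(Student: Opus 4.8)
The plan is to realize the prescribed graph as $\Delta(G)$ for a group of the shape $G = P \rtimes H$, in the spirit of the smallest instance $SL(2,3)$ (whose degrees $\{1,2,3\}$ give the two components $\{2\}$ and $\{3\}$). Write $N = q_1 q_2 \cdots q_n$. I would aim to build $G$ with
\[
cd(G) = \{p^{N}\} \cup S, \qquad N \in S \subseteq \{\text{products of the } q_i\},
\]
since then $p$ occurs only through the prime power $p^{N}$ (so $p$ is an isolated vertex), while the degree $N$ is divisible by every $q_i$ (so $q_1,\dots,q_n$ span a complete, hence connected, subgraph); this forces $\Delta(G)$ to consist of exactly the two components $\{p\}$ and $\{q_1,\dots,q_n\}$. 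The hypothesis that $p$ is odd enters precisely to make an extraspecial $p$-group of exponent $p$ available.

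Construction: let $P$ be the extraspecial $p$-group of exponent $p$ and order $p^{1+2N}$, so that $Z := P' = Z(P) \cong C_p$, the quotient $V := P/P' \cong \mathbb{F}_p^{2N}$ carries the nondegenerate alternating commutator form, and $cd(P) = \{1, p^{N}\}$. Let $U = \mathbb{F}_p[C_N]$ be the regular module of $H := C_N$ and place the standard hyperbolic form on $V \cong U \oplus U^{*}$; this embeds $H \hookrightarrow \mathrm{Sp}(V)$ with a regular orbit on $V$ (the orbit of a basis vector of $U$). As $|H| = N$ is coprime to $p$, this symplectic action lifts to an action of $H$ on $P$ centralizing $Z$, and I set $G := P \rtimes H$. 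Here $P$ is normal and nilpotent with $G/P \cong C_N$ abelian, so $G$ is solvable of Fitting height $2$.

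Character computation: every $\chi \in Irr(G)$ lies over some $\theta \in Irr(P)$. If $\theta$ is one of the nonlinear characters (degree $p^{N}$), then $\theta$ is $H$-invariant, since $H$ fixes $Z$ and hence fixes the central character that determines $\theta$; by coprimality $\theta$ extends to $G$, and because $H$ is abelian, Gallagher's theorem shows that every constituent over $\theta$ has degree $p^{N}\cdot 1 = p^{N}$. If $\theta$ is linear, then $\chi$ is inflated from $G/P' = V \rtimes H$, a group with abelian kernel $V$ and abelian complement $H$, whose degrees are exactly the $H$-orbit sizes on $\widehat V$; these divide $N$, and the regular orbit contributes the degree $N = q_1\cdots q_n$. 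Hence $cd(G) = \{p^{N}\} \cup \{\,H\text{-orbit sizes on } \widehat V\,\}$, which has the required form, and $\rho(G) = \{p, q_1, \dots, q_n\}$ with the two claimed components.

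The main obstacle is preventing any degree divisible by $p q_i$, i.e.\ keeping the two prime-sets from being joined, and two design choices do exactly this. First, $H$ must centralize $P' = Z$, so that the degree-$p^{N}$ characters of $P$ remain $H$-invariant and merely extend, rather than inducing up to degree $p^{N}\cdot N$; this is why $H$ is placed inside $\mathrm{Sp}(V)$ rather than the full $GL(V)$. Second, $H$ must be abelian, so that Gallagher's theorem produces no extra factors on top of $p^{N}$; a nonabelian $H$ would reintroduce degrees of the form $p^{N}\cdot(\text{something divisible by }q_i)$ and destroy the splitting. The remaining point to check carefully is that the symplectic representation genuinely admits a regular orbit, so that the full product $q_1\cdots q_n$ (and not merely a proper sub-product) is realized as a degree; the use of the regular module $U$ is what guarantees this.
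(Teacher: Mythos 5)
The paper itself offers no proof of this statement; it is quoted verbatim as Lemma~2.1 of Lewis's paper on character degree graphs of Fitting height~2 groups, so the only meaningful comparison is with Lewis's original construction. Your argument is correct and follows the same basic strategy (extraspecial $p$-group acted on symplectically by a cyclic $p'$-group that centralizes the centre, so that the nonlinear characters contribute only the isolated power of $p$ while the linear ones contribute a degree divisible by $q_1\cdots q_n$), but you realize the symplectic module differently: Lewis takes $a$ to be the multiplicative order of $p$ modulo $m=q_1\cdots q_n$, embeds $C_m$ in $GF(p^a)^{\times}$, and lets it act hyperbolically on $GF(p^a)\oplus GF(p^a)$, which gives $cd(G)=\{1,p^a,m\}$ with an extraspecial group of order $p^{1+2a}$; you instead use the regular module $\mathbb{F}_p[C_m]$, which avoids having to invoke the order of $p$ mod $m$ at the cost of an extraspecial group of order $p^{1+2m}$ and a slightly larger set of character degrees (all orbit sizes, not just $1$ and $m$). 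Both give the same graph, since every orbit size divides $m$.

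Two points in your write-up deserve explicit justification rather than assertion. First, the claim that a subgroup of $\mathrm{Sp}(V)$ lifts to automorphisms of the exponent-$p$ extraspecial group $P$ acting trivially on $Z(P)$ is Winter's theorem on $\mathrm{Aut}(P)$ (or, more directly, one can build $P$ functorially from the symplectic space for odd $p$, which is exactly where the hypothesis that $p$ is odd is consumed); the coprimality of $|H|$ and $p$ is what splits the preimage of $H$ over $\mathrm{Inn}(P)$. Second, your regular orbit lives in $V$, whereas the degrees of $V\rtimes H$ are orbit sizes on $Irr(V)\cong V^{*}$; you need the observation that $V^{*}\cong U^{*}\oplus U\cong V$ as $H$-modules (the regular module is self-dual), so the regular orbit transfers. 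With those two remarks supplied, the proof is complete.
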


\begin{rem}{\bf Operation D:}\label{rem2.6} \normalfont Consider a character degree graph $\Delta(G).$ Select two distinct primes $p$ and $q$ such that $p$ is odd prime and $p,q\not\in \rho(G).$ By Theorem~\ref{2.6}, there exists a solvable group $H$ of Fitting height $2$ and $\Delta(H)$ is the graph containing two isolated vertices. Now one can make a obtain the direct product $\Delta(G\times H).$  Given $\Delta(G),$ the construction of  such a direct product $\Delta(G\times H)$ is used in the proof of Theorem~\ref{4.1} repeatedly. We denote this process as Operation D.
\end{rem}
 
\begin{thm}\cite[Lemma 4.1]{EII}\label{2.7}
Let $G$ be a solvable group  where the diameter of  $\Delta(G)$ is $3$ with Lewis partition $\rho(G)=\rho_1\cup\rho_2\cup\rho_3\cup\rho_4.$ Then $\Delta(G)$ is a block if and only if $|\rho_2|,|\rho_3|\geq 2.$
\end{thm}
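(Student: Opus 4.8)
The plan is to prove the statement entirely from the structural description of the diameter-$3$ Lewis partition recorded in Remark~\ref{rem2.2}, reducing the cut-vertex analysis to a connectivity argument on a graph built from two complete ``halves.'' First I would note that since $\mathrm{diam}(\Delta(G))=3$ the graph is connected, and the unique connected graph on four vertices of diameter $3$ is the path $P_4$, which by Theorem~\ref{2.3} is not a solvable character degree graph; hence $|\rho(G)|\geq 5$ and the full partition $\rho(G)=\rho_1\cup\rho_2\cup\rho_3\cup\rho_4$ of Remark~\ref{rem2.2} is available. The facts I would extract from that remark are: $\rho_1\cup\rho_2$ and $\rho_3\cup\rho_4$ each induce complete subgraphs; the only edges joining $\{\rho_1,\rho_2\}$ to $\{\rho_3,\rho_4\}$ are edges between $\rho_2$ and $\rho_3$; every vertex of $\rho_2$ has a neighbour in $\rho_3$ and conversely; and $r\in\rho_1$, $s\in\rho_4$, with $\rho_2,\rho_3$ both nonempty (a shortest $r$--$s$ path $r-a-b-s$ forces $a\in\rho_2$, $b\in\rho_3$). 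Thus every edge crossing between the two complete halves passes through the $\rho_2$--$\rho_3$ ``bottleneck.''

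For the forward implication I would argue by contraposition. Suppose $|\rho_2|=1$, say $\rho_2=\{p\}$. Since the only cross edges are incident to $\rho_2$, every crossing edge is incident to $p$, so deleting $p$ leaves no edge between $\rho_1$ and $\rho_3\cup\rho_4$. As $\rho_1\ni r$ and $\rho_3\cup\rho_4\ni s$ are both nonempty, $\Delta(G)-p$ is disconnected, so $p$ is a cut vertex and $\Delta(G)$ is not a block. The case $|\rho_3|=1$ is identical after interchanging the roles of $r$ and $s$, which swaps $\rho_1\leftrightarrow\rho_4$ and $\rho_2\leftrightarrow\rho_3$. Hence if $\Delta(G)$ is a block then $|\rho_2|\geq 2$ and $|\rho_3|\geq 2$.

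For the converse, assume $|\rho_2|\geq 2$ and $|\rho_3|\geq 2$ and show that $\Delta(G)$ has no cut vertex. Let $v$ be any vertex and consider $\Delta(G)-v$. Each of $\rho_1\cup\rho_2$ and $\rho_3\cup\rho_4$ induces a complete graph, so whatever survives of either half stays internally connected; it remains only to keep the two halves joined. If $v\in\rho_1\cup\rho_4$ the bottleneck is untouched; if $v\in\rho_2$ then, since $|\rho_2|\geq 2$, some $w\in\rho_2\setminus\{v\}$ survives and, by Remark~\ref{rem2.2}, $w$ has a neighbour in $\rho_3$, so a crossing edge remains; the case $v\in\rho_3$ is symmetric using $|\rho_3|\geq 2$. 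In every case $\Delta(G)-v$ is connected, so $v$ is not a cut vertex, and $\Delta(G)$, being connected with at least five vertices and no cut vertex, is a block. Alternatively one may invoke Theorem~\ref{2.5}, which already equates $1$-connectivity with $|\rho_2|=1$, together with Theorem~\ref{2.4} to locate the unique cut vertex.

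The only point demanding care---the main obstacle---is the connectivity bookkeeping in the converse: one must verify that deleting a single vertex never simultaneously destroys the internal connectivity of a half and severs the last $\rho_2$--$\rho_3$ crossing edge. This is precisely where the two hypotheses $|\rho_2|\geq 2$ and $|\rho_3|\geq 2$ are each used, and where the clause of Remark~\ref{rem2.2} that every vertex of $\rho_2$ has a neighbour in $\rho_3$ (and conversely) is essential, since without it a surviving $\rho_2$-vertex might fail to reach $\rho_3$.
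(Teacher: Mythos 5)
Your proof is correct, but note that the paper does not prove this statement at all: it is quoted verbatim from Ebrahimi--Iranmanesh \cite[Lemma 4.1]{EII}, so there is no in-paper argument to compare against. What you have written is a sound, self-contained derivation from the structural facts recorded in Remark~\ref{rem2.2}. The preliminary step (ruling out $|\rho(G)|=4$ via $P_4$ and Theorem~\ref{2.3} so that the Lewis partition applies) is a detail the paper glosses over elsewhere, and you handle it cleanly. Your forward direction correctly isolates that every edge between the two complete halves is a $\rho_2$--$\rho_3$ edge, so $|\rho_2|=1$ (or symmetrically $|\rho_3|=1$) forces a cut vertex; your converse correctly checks that deleting any single vertex leaves each half a nonempty complete (hence connected) graph and, using $|\rho_2|,|\rho_3|\geq 2$ together with the fact that every $\rho_2$-vertex has a $\rho_3$-neighbour, leaves at least one crossing edge. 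The alternative you mention---combining Theorem~\ref{2.5} (``$1$-connected iff $|\rho_2|=1$'') with Theorem~\ref{2.4}---is essentially how the cited literature packages the same content, though to recover the symmetric condition on $\rho_3$ one must observe that interchanging $r$ and $s$ swaps $\rho_2$ and $\rho_3$; your direct argument avoids that extra step. No gaps.
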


\begin{defn}\cite[pp. 502]{BL}\label{def2.8} Using direct product, one can find bigger groups from smaller groups. The same may be used to construct higher order character degree graphs. For two groups $A$ and $B$ where $\rho(A)$ and  $\rho(B)$ are disjoint,  we have that $\rho(A\times B) =  \rho(A) \cup \rho(B)$.  Define an edge between vertices $p$ and $q$ in $\rho(A\times B)$  if any of the following is satisfied:
\begin{itemize}
\item [\rm (i)] $p , q\in \rho (A)$ and there is an edge between $p$ and $q$ in $\Delta(A);$
\item [\rm (ii)] $p, q \in \rho (B)$ and there is an edge between $p$ and $q$ in $\Delta(B);$
\item [\rm (iii)] $p\in\rho (A)$ and $q\in\rho (B);$
\item [\rm (iv)]  $p \in\rho (B) $ and $q\in\rho (A).$ 
\end{itemize}
Now we get a higher order character degree graph and it is called direct product.
\end{defn}

\section{Eulerian Character Degree Graphs}
 We now prove below  the character degree graph $\Delta(G)$ to be Eulerian when the diameter of $\Delta(G)$ is at most 3.
\begin{thm}\label{3.1}
Let $G$ be a solvable group and let $\Delta(G)$  be the character degree graph of  $G$  with $n$ vertices. If any of the following conditions is true, then  $\Delta(G)$ is an Eulerian  graph.
\begin{itemize}
\item [\rm (i)]  $\Delta(G)$ is a complete graph with  $n \geq 3$ is odd;
\item[\rm (ii)]  $\Delta(G)$ is a non-complete regular graph with $n \geq 4$ and $n$ is even;
\item[\rm (iii)] $\rho(G)\geq 3,$ $\Delta(G)$ is not a block, each block of $\Delta(G)$ contains odd number of vertices and diameter of $\Delta(G)$ is at most 2.
\end{itemize}
\end{thm}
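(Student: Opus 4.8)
The plan is to reduce everything to the Euler characterization quoted in Section~2: a connected graph is Eulerian if and only if every vertex has even degree. So for each of the three cases it suffices to verify (a) that $\Delta(G)$ is connected and (b) that every vertex of $\Delta(G)$ has even degree. Cases (i) and (ii) are essentially degree-counting, while the structural work sits in case (iii).

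Case (i) is immediate: $K_n$ is connected and every vertex has degree $n-1$, which is even precisely when $n$ is odd. For case (ii) I would first invoke Theorem~\ref{2.4.1}: since $\Delta(G)$ is non-complete and regular, it is $(n-2)$-regular, so every vertex has degree $n-2$, which is even because $n$ is even. For connectedness I would rule out a splitting into two components by a counting argument: if $\Delta(G)$ had components of sizes $a$ and $b$ with $a+b=n$, then $(n-2)$-regularity forces $n-2\le a-1$ and $n-2\le b-1$, whence $a,b\ge n-1$ and $n=a+b\ge 2(n-1)$, contradicting $n\ge 4$. Hence $\Delta(G)$ is connected with all degrees even, so it is Eulerian.

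The substance is case (iii). Here the hypothesis $\mathrm{diam}(\Delta(G))\le 2$ already forces connectedness (the diameter is defined and finite only for a connected graph), so only the parity of the degrees must be checked. Since $\Delta(G)$ is not a block and has diameter at most $2$, Theorem~\ref{2.2} shows every block is complete, and by hypothesis each block $K_m$ has $m$ odd. By Theorem~\ref{2.4} there is at most one cut vertex; as $\Delta(G)$ is connected but not a single block, it has at least two blocks and hence \emph{exactly} one cut vertex $v$. Then the block--cut-vertex tree is a star centred at $v$, i.e. every block contains $v$ and distinct blocks meet only in $v$. Consequently a vertex $u\neq v$ lies in a unique block $K_m$ and has degree $m-1$, which is even; the cut vertex $v$ lies in all blocks $K_{m_1},\dots,K_{m_t}$ with pairwise-disjoint neighbourhoods, so its degree is $\sum_{i=1}^{t}(m_i-1)$, a sum of even numbers and hence even. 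Thus all degrees are even and $\Delta(G)$ is Eulerian.

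The step I expect to be the main obstacle is pinning down the global shape in case (iii): translating ``at most one cut vertex'' (Theorem~\ref{2.4}) together with the complete-block decomposition (Theorem~\ref{2.2}) into the bouquet-of-complete-graphs picture, so that the degree of the cut vertex is a clean sum of block contributions. Once that picture is secured, the parity bookkeeping is routine; everywhere the oddness of each block size is exactly what makes each local contribution $m_i-1$ even.
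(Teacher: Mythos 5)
Your proposal is correct and follows essentially the same route as the paper: reduce each case to the even-degree criterion for Eulerian graphs, invoking Theorem~\ref{2.4.1} for the $(n-2)$-regularity in (ii) and Theorem~\ref{2.2} for the complete-block decomposition in (iii). The only difference is that you explicitly verify connectedness in (ii) and spell out the one-cut-vertex, bouquet-of-complete-blocks degree count in (iii), details the paper's proof leaves implicit.
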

\begin{proof} (i) By assumption, $\Delta(G)$ is a complete graph with odd number of vertices. Hence each vertex in $\Delta(G)$ is of even degree and so $\Delta(G)$ is Eulerian.  

(ii) By assumption, $\Delta(G)$ is a non-complete regular graph. By Theorem~\ref{2.4.1} $\Delta(G)$ is a $n-2$ regular graph. This gives that each vertex in $\Delta(G)$ is of even degree and so $\Delta(G)$ is Eulerian.

(iii) Suppose $\Delta(G)$ has diameter at most $2.$ By Theorem~\ref{2.2}, each block of $\Delta(G)$ is a complete graph. By the assumption that each block contains odd number of vertices. So, all the vertices of $\Delta(G)$ are even degree. Therefore $\Delta(G)$ is an Eulerian graph. \hfill $\square$ 
\end{proof}
\begin{rem}\label{rm3.3}\normalfont The assumption that the diameter of $\Delta(G)$ is at most 2  is  essential in Theorem~\ref{3.1}(iii). For, assume that $\Delta(G)$ has diameter $3$. By assumption $\Delta(G)$ is not a block, and each block of $\Delta(G)$ contain odd number of vertices. $\Delta(G)$ has two blocks say $B_1$ and $B_2$ and it has a cut vertex. This cut vertex is belongs to $\rho_2$. By  Theorem~\ref{2.5}, $|\rho_2| =1$. Since $\rho_1 \cup \rho_2$ and $\rho_3 \cup \rho_4$ induce complete subgraphs, each vertex  in  $\rho_3 \cup \rho_4$ is of even degree.  Since $|\rho_2| = 1$, and every vertex in  $\rho_3$ is adjacent to some vertex in $\rho_2$, each vertex  in $\rho_3$ is of odd degree.  Hence  $\Delta(G)$ is not an Eulerian.
\end{rem}
\begin{rem}\normalfont \label{rm3.4} The condition that each block should contain odd number of vertices in  Theorem~\ref{3.1}(iii) is necessary. For instance, the graph in Figure 2 is the character degree graph of a solvable group with diameter $2$ \cite[pp. 503]{BL}. This graph contains two blocks and one block contains odd degree vertices. Hence it is not an Eulerian graph.

\begin{figure}[ht]
\centering
\begin{tikzpicture}
\draw[fill=black] (0,0) circle (3pt);
\draw[fill=black] (1,1) circle (3pt);
 \draw[fill=black] (0,2) circle (3pt);
 \draw[fill=black] (4,0) circle (3pt);
\draw[fill=black] (2,1) circle (3pt);
 \draw[fill=black] (4,2) circle (3pt);
\draw[thick] (0,0) --(0,2) --(2,1) --(4,2);
\draw[thick] (0,0) --(2,1) --(4,0) --(4,2);
\draw[thick] (0,0) --(1,1) --(2,1) ;
\draw[thick] (0,0) --(1,1) --(0,2) ;
\end{tikzpicture}
\caption{Graph with six vertices}
\end{figure}
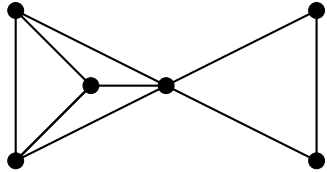
 
\end{rem}
We now prove below that the character degree graph $\Delta(G)$ is Eulerian when the diameter of $\Delta(G)$ is $3.$
 
\begin{thm}\label{3.2} Let $G$ be a solvable group with diameter of $\Delta(G)$ is $3.$ Let the  Lewis' partition of $G$ be $\rho(G)=\rho_1\cup\rho_2\cup\rho_3\cup\rho_4.$ Then  $\Delta(G)$ is an Eulerian graph if and only if the following conditions are hold:
\begin{itemize}
\item [\rm (i)] $\Delta(G)$ is a block;
\item [\rm (ii)] Both $|\rho_1 \cup \rho_2|$ and $|\rho_3 \cup \rho_4|$ are odd;
\item [\rm (iii)] The subgraph of $\Delta(G)$ induced by $\rho_2 \cup \rho_3$ is Eulerian.
\end{itemize}
\end{thm}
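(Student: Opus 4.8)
The plan is to invoke the Euler characterization quoted in the preliminaries: since $\Delta(G)$ has diameter $3$ it is connected, so $\Delta(G)$ is Eulerian if and only if every vertex has even degree. I would then compute degrees class by class using the partition structure of Remark~\ref{rem2.2}, namely that $\rho_1\cup\rho_2$ and $\rho_3\cup\rho_4$ each induce a complete subgraph, that the only edges joining the two cliques run between $\rho_2$ and $\rho_3$, that no vertex of $\rho_1$ meets $\rho_3\cup\rho_4$, and that no vertex of $\rho_4$ meets $\rho_1\cup\rho_2$. The whole proof is then an exercise in reading off when each of the four degree formulas is even.

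First I would handle $\rho_1$ and $\rho_4$, which pin down condition~(ii). A vertex $v\in\rho_1$ is adjacent precisely to the remaining vertices of the clique $\rho_1\cup\rho_2$, so $\deg_{\Delta(G)}(v)=|\rho_1\cup\rho_2|-1$; as $r\in\rho_1$ this class is nonempty, and its degrees are even exactly when $|\rho_1\cup\rho_2|$ is odd. Symmetrically $\deg_{\Delta(G)}(w)=|\rho_3\cup\rho_4|-1$ for $w\in\rho_4$, and $s\in\rho_4$, so evenness here is equivalent to $|\rho_3\cup\rho_4|$ being odd. Together these yield~(ii).

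Next I would establish the block condition~(i). I would show that an Eulerian $\Delta(G)$ forces $|\rho_2|\ge 2$ and $|\rho_3|\ge 2$, whence $\Delta(G)$ is a block by Theorem~\ref{2.7}. Indeed, if $|\rho_2|=1$ with unique prime $p$, then since every vertex of $\rho_3$ is adjacent to some vertex of $\rho_2$ (Remark~\ref{rem2.2}), each $x\in\rho_3$ is adjacent to exactly $p$ among $\rho_2$; this gives $\deg_{\Delta(G)}(x)=|\rho_3\cup\rho_4|$ while $\deg_{\Delta(G)}(w)=|\rho_3\cup\rho_4|-1$ for $w\in\rho_4$, and these two integers cannot both be even, contradicting Eulerianness (Theorem~\ref{2.5} already flags that such a $\Delta(G)$ is not a block). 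The case $|\rho_3|=1$ is symmetric, using the $\rho_1,\rho_2$ degrees. Conversely a block has $|\rho_2|,|\rho_3|\ge 2$, so the graph $H$ below is connected.

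The crux is condition~(iii), where $H$ denotes the subgraph induced by $\rho_2\cup\rho_3$. The key observation is that the neighbours of $u\in\rho_2$ lying outside $H$ are exactly $\rho_1$, and those of $x\in\rho_3$ outside $H$ are exactly $\rho_4$, so that $\deg_{\Delta(G)}(u)=|\rho_1|+\deg_H(u)$ and $\deg_{\Delta(G)}(x)=|\rho_4|+\deg_H(x)$. Moreover $H$ is connected, because each of $\rho_2,\rho_3$ induces a clique and at least one cross edge joins them; hence by the Euler characterization ``$H$ is Eulerian'' is the same as ``$\deg_H$ is even at every vertex.'' Consequently the even-degree requirement on $\rho_2\cup\rho_3$ inside $\Delta(G)$ amounts to $\deg_H(u)\equiv|\rho_1|$ and $\deg_H(x)\equiv|\rho_4|\pmod{2}$, and I would reconcile these congruences with~(iii) by feeding back~(i) and~(ii). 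This parity bookkeeping---matching the internal degrees in $H$ against the global degrees in $\Delta(G)$ while tracking $|\rho_1|$ and $|\rho_4|$ modulo $2$---is the step I expect to be the \emph{main obstacle}; once it is settled, assembling the contributions of the four vertex classes gives the equivalence between $\Delta(G)$ being Eulerian and the simultaneous validity of~(i),~(ii), and~(iii).
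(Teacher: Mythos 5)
Your reduction to the even-degree criterion and your degree formulas for the four Lewis classes are correct, and your treatments of (ii) and of (i) (via $|\rho_2|,|\rho_3|\ge 2$ and Theorem~\ref{2.7}) are sound; indeed your argument for (i) is more self-contained than the paper's, which appeals to Remark~\ref{rm3.3} whose hypotheses are not literally satisfied here. The genuine gap is exactly the step you flag as the ``main obstacle,'' and it cannot be closed as you hope. Writing $H$ for the subgraph induced by $\rho_2\cup\rho_3$, your (correct) identities $\deg_{\Delta(G)}(u)=|\rho_1|+\deg_H(u)$ for $u\in\rho_2$ and $\deg_{\Delta(G)}(x)=|\rho_4|+\deg_H(x)$ for $x\in\rho_3$ show that ``every vertex of $\rho_2\cup\rho_3$ has even degree in $\Delta(G)$'' is equivalent to ``$H$ is Eulerian'' only when $|\rho_1|$ and $|\rho_4|$ are both even, and neither (i) nor (ii) delivers this: condition (ii) only says that $|\rho_1|+|\rho_2|$ and $|\rho_3|+|\rho_4|$ are odd. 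Concretely, take $|\rho_1|=|\rho_4|=1$ and $|\rho_2|=|\rho_3|=2$ with every vertex of $\rho_2$ joined to every vertex of $\rho_3$: the degrees are $2,4,4,4,4,2$, so the graph is Eulerian and is a block of diameter $3$ satisfying (i) and (ii), yet $H$ is $K_4$, which is not Eulerian. So the parity bookkeeping, carried out honestly, refutes the stated equivalence for this configuration rather than proving it; to rescue the argument one would need a group-theoretic fact (that $|\rho_1|$ and $|\rho_4|$ are forced to be even, or that such configurations never occur as $\Delta(G)$) which is not among the quoted preliminaries.

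It is worth recording why the paper's own proof does not see this obstruction: it asserts that the degree of $u\in\rho_2$ in $\Delta(G)$ equals the sum of its degree in the subgraph induced by $\rho_1\cup\rho_2$ and its degree in the subgraph induced by $\rho_2\cup\rho_3$. That identity double-counts the $|\rho_2|-1$ edges from $u$ into the rest of $\rho_2$, so it is correct modulo $2$ only when $|\rho_2|$ is odd; your formula exposes precisely the missing case ($|\rho_1|$ odd, $|\rho_2|$ even), and the same issue occurs on the $\rho_3$ side. In short: your approach is the same degree-counting approach as the paper's, parts (i) and (ii) go through, but part (iii) cannot be completed from the stated hypotheses, and the obstacle you identified is a genuine flaw in the claimed equivalence, not a missing trick on your part.
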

\begin{proof}
Assume that $\Delta(G)$ be Eulerian graph.

(i) If $\Delta(G)$ is not a block, by Remark~\ref{rm3.3} is not Eulerian. Hence $\Delta(G)$ is a block.
 
(ii) We claim that both $|\rho_1 \cup \rho_2|$ and $|\rho_3 \cup \rho_4|$ are odd. Suppose not, let us assume that $|\rho_1\cup \rho_2|$ is even. As mentioned in Remark~\ref{rem2.2}, $\rho_1\cup \rho_2$ is a complete graph and no prime in $\rho_1$ is adjacent to any prime in $\rho_3\cup \rho_4.$ Hence the vertices in $\rho_1$ are of odd degree and so $\Delta(G)$is not Eulerian, which is a contradiction. Therefore $|\rho_1 \cup \rho_2|$ is odd. 

Suppose that $|\rho_3\cup \rho_4|$ is even. As mentioned in Remark~\ref{rem2.2}, $\rho_3\cup \rho_4$ is a complete graph and no prime in $\rho_4$ is adjacent to any prime in
$\rho_1\cup \rho_2.$ Hence the vertices in $\rho_4$ are of odd degree and so $\Delta(G)$is not Eulerian, which is a contradiction. Therefore $|\rho_3 \cup \rho_4|$ is odd. 

(iii) Suppose the subgraph induced by $\rho_2\cup \rho_3$ is non Eulerian.  One can refer Remark~\ref{rem2.2} for the following facts. No prime in $\rho_1$ is adjacent to any prime in $\rho_3\cup \rho_4.$ Similarly no prime in $\rho_4$ is adjacent to any prime in $\rho_1\cup \rho_2.$  Since $\rho_1 \cup \rho_2$ and $\rho_3 \cup \rho_4$ are complete graphs with odd number of vertices. Hence each vertex in $\rho_2$ and $\rho_3$ is of even degree in sub graphs induced by $\rho_1 \cup \rho_2$ and $\rho_3 \cup \rho_4$. Therefore  the vertices in $\rho_1$ and $\rho_4$ are of even degree. But the degree of each vertex in $\rho_2$ is the sum of the degree of that vertex in $\rho_2$ in the subgraph induced by $\rho_2\cup \rho_3$ and degree of that vertex in $\rho_2$ in  the subgraph induced by $\rho_1\cup \rho_2$.  Similarly degree of each vertex in $\rho_3$ is the sum of degree of that vertex in $\rho_3$ in the subgraph induced by $\rho_2\cup \rho_3$ and degree of that vertex in $\rho_3$ in the subgraph induced by $\rho_3\cup \rho_4$. By assumption $\rho_2\cup \rho_3$ is non Eulerian and hence there are vertices of odd degree in $\rho_2$ or $\rho_3$ in the subgraph induced by $\rho_2\cup \rho_3.$ Hence some vertices in $\rho_2$  or  $\rho_3$ are of odd degree, which is a contradiction to $\Delta(G)$ is an Eulerian graph. Therefore the vertex induced subgraph $\rho_2\cup \rho_3$ is Eulerian.

Conversely, assume that conditions (i)-(iii) are true. According to Lewis partition, the subgraphs induced by $\rho_1\cup \rho_2$ and $\rho_3\cup \rho_4$ are complete subgraphs of $\Delta(G),$  no prime in $\rho_1$ is adjacent to any prime in $\rho_3\cup \rho_4$ and  no prime in $\rho_4$ is adjacent to any prime in $\rho_1\cup \rho_2.$ Since both $|\rho_1 \cup \rho_2|$ and $|\rho_3 \cup \rho_4|$ are odd, vertices in $\rho_1$ and $\rho_4$ are of even degree. Since $\Delta(G)$ is a block, by Theorem~\ref{2.7} we get that $|\rho_2|, |\rho_3|\geq 2.$ Since the subgraph induced by $\rho_2 \cup \rho_3$ is Eulerian, we get that vertices in $\rho_2$ and $\rho_3$ are of even degree. According to Lewis partition, all subsets $\rho_1, \rho_2, \rho_3$ and $\rho_4$ in $\rho(G)$  are non-empty disjoint subsets. Therefore all the vertices in $\rho(G)$ are even degree. Hence $\Delta(G)$ is Eulerian graph. \hfill $\square$ 
\end{proof}

As stated in Theorem~\ref{2.4}, Lewis has proved that $\Delta(G)$ has at most one cut vertex.  We shall prove that $\Delta(G)$ has at most two cut edges in the following theorem.

\begin{thm}\label{3.3} Let $G$ be a solvable group and let the character degree graph $\Delta(G)$ be connected. Then $\Delta(G)$ has at most two cut edges.
\end{thm}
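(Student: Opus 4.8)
The plan is to analyze the endpoints of cut edges (bridges) using two facts already available in the excerpt: $\Delta(G)$ has at most one cut vertex (Theorem~\ref{2.4}), and $\Delta(G)$ satisfies P\'{a}lfy's three-prime condition (Remark~\ref{rem2.1}), namely that among any three distinct vertices at least two are adjacent, equivalently that $\Delta(G)$ contains no independent set of size $3$. The strategy is to show that all cut edges are forced to share a single common endpoint as pendant edges, and then apply P\'{a}lfy's condition to the pendant vertices.

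First I would dispose of the trivial cases $|\rho(G)|\leq 2$, where $\Delta(G)$ is $K_1$ or $K_2$ and so has at most one cut edge. Assume then that $|\rho(G)|\geq 3$ and that $e=uv$ is a cut edge. Removing $e$ splits $\Delta(G)$ into two components, one containing $u$ and one containing $v$; since there are at least three vertices in total, at least one of these components has at least two vertices. The endpoint of $e$ lying in that larger component is then a cut vertex of $\Delta(G)$, because every path joining the two components must traverse $e$. Hence \emph{every} cut edge has at least one endpoint that is a cut vertex.

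Next I invoke Theorem~\ref{2.4}: $\Delta(G)$ has at most one cut vertex, say $v$. By the previous step the cut-vertex endpoint of each cut edge must be $v$, so all cut edges share the common endpoint $v$. Moreover, if $e=vw$ is a cut edge, then its other endpoint $w$ is not a cut vertex (there is only one, namely $v$), and therefore the component of $\Delta(G)-e$ containing $w$ must be the single vertex $w$; consequently $d(w)=1$ and $w$ is adjacent only to $v$. Thus each cut edge corresponds to a distinct pendant vertex attached to $v$, and any two such pendant vertices are non-adjacent.

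Finally, suppose for contradiction that $\Delta(G)$ had three cut edges, giving three distinct pendant vertices $w_1,w_2,w_3$, each adjacent only to $v$. These are pairwise non-adjacent, hence form an independent set of size $3$, contradicting P\'{a}lfy's condition (Remark~\ref{rem2.1}). Therefore $\Delta(G)$ has at most two cut edges. The step I expect to be the main obstacle is the reduction showing that every cut edge must attach to the single cut vertex as a pendant edge; once this structural claim is secured, P\'{a}lfy's theorem closes the argument at once. The bound is sharp, since the path $P_3$---which is realizable as a character degree graph of a solvable group by Remark~\ref{rem2.3}---has exactly two cut edges.
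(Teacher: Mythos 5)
Your proof is correct, and it takes a genuinely different route from the paper's. The paper argues by a case analysis on the block decomposition of $\Delta(G)$: it distinguishes whether $\Delta(G)$ is a single block, or has two blocks of which zero, one, or two are isomorphic to $K_2$, and simply reads off the number of cut edges ($0$, $0$, $1$, $2$) in each case. That argument implicitly assumes $\Delta(G)$ has at most two blocks, which does not follow from Theorem~\ref{2.4} alone (a star $K_{1,m}$ has one cut vertex but $m$ blocks, each a $K_2$); some extra input such as P\'{a}lfy's condition is needed to exclude three or more blocks meeting at the unique cut vertex, and the paper leaves this unsaid. Your argument supplies exactly that ingredient: you first show that every cut edge of a graph on at least three vertices has a cut-vertex endpoint, so by Theorem~\ref{2.4} all cut edges are pendant edges hanging from the unique cut vertex $v$, and then P\'{a}lfy's three-prime condition (Remark~\ref{rem2.1}) caps the number of pendant neighbours of $v$ at two, since three of them would form an independent set of size $3$. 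Your proof is thus more self-contained and rigorous than the paper's block-counting argument, at the cost of a slightly longer structural reduction; the closing observation that $P_3$ attains the bound and is realizable by Remark~\ref{rem2.3} is a worthwhile addition the paper does not make.
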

\begin{proof}  \textbf{Case 1.} If $\Delta(G)$ is a block, then there is no cut edge. 

\textbf{Case 2.} If $\Delta(G)$ is not a block and  $\Delta(G)$ has two blocks say  $B_1$ and $ B_2$ with both blocks are not $K_2,$ then there is no cut edge.

\textbf{Case 3.} If $\Delta(G)$ is not a block and $\Delta(G)$ has two blocks say  $B_1$ and $ B_2$ with at least one of them say  $B_1$  is $K_2.$  Then $\Delta(G)$ has one cut edge.

\textbf{Case 4.} If $\Delta(G)$ is not a block and $\Delta(G)$ has two blocks say $B_1$ and $ B_2$ with both them as $K_2.$ Then $\Delta(G)$ has two cut edges.
 \hfill $\square$ 
\end{proof}

Having attempted Eulerian characterization for character degree graph, we prove below that the converse of a result by Morresi Zuccari\cite{MZ}. Actually Morresi Zuccari proved that if $\Gamma(G)$ is a non-complete and regular character degree graph of a finite solvable group $G,$ then $\Gamma(G)$ is $n-2$ regular. We prove the converse of this through a classification of  character degree graphs with six vertices obtained by Bisler\cite{BL}. In fact Bisler has effectively used the direct product of two character degree graphs. We also utilize the same idea of direct product in order to prove that every $n-2$ regular graph is the character degree graph of some solvable group.
 
\begin {thm}\label{3.5} Let $n\geq 4$ be an even integer. Every $n-2$ regular graph is the character degree graph $\Delta(G)$ for some solvable group $G.$
\end{thm}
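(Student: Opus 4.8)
The plan is to first pin down exactly which graphs are $(n-2)$-regular on $n$ vertices. If $\Gamma$ is $(n-2)$-regular on $n$ vertices, then in the complement $\overline{\Gamma}$ every vertex has degree $(n-1)-(n-2)=1$, so $\overline{\Gamma}$ is a $1$-regular graph, i.e.\ a perfect matching on $n$ vertices. A perfect matching exists precisely when $n$ is even (which is why the hypothesis forces $n$ even), and any two perfect matchings on $n$ vertices are isomorphic. Hence, up to isomorphism, there is a \emph{unique} $(n-2)$-regular graph on $n$ vertices, namely the complement of a perfect matching. Writing $m=n/2$, this is the cocktail-party graph $K_{2\times m}$, the complete multipartite graph with $m$ parts each of size $2$; equivalently, it is obtained from $K_n$ by deleting a perfect matching. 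So it suffices to realize this single graph as $\Delta(G)$ for each even $n\ge 4$.

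The second observation is that $K_{2\times m}$ has a clean recursive structure with respect to the graph join: adjoining one more part of size $2$ to $K_{2\times m}$ (joining two new, mutually non-adjacent vertices to every existing vertex) produces $K_{2\times (m+1)}$. By Definition~\ref{def2.8}, the character degree graph of a direct product $A\times B$ (with $\rho(A),\rho(B)$ disjoint) is exactly the join of $\Delta(A)$ and $\Delta(B)$: all edges inside $\rho(A)$ and inside $\rho(B)$ are kept, and every vertex of $\rho(A)$ is joined to every vertex of $\rho(B)$. Moreover, Theorem~\ref{2.6} (applied with a single prime $q_1$ and an odd prime $p$) supplies a solvable group $H$ with $\Delta(H)$ equal to two isolated vertices, i.e.\ the edgeless graph $\overline{K_2}$. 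This is precisely Operation D of Remark~\ref{rem2.6}: performing it on a given $\Delta(G)$ adjoins two new mutually non-adjacent vertices, each joined to all of $\rho(G)$.

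I would then run an induction on $m=n/2\ge 2$. For the base case $m=1$, Theorem~\ref{2.6} already gives a solvable group $H_1$ with $\Delta(H_1)=\overline{K_2}=K_{2\times 1}$. For the inductive step, assume $K_{2\times m}$ is realized by a solvable group $G_m$ for some $m\ge 1$. Choose two new primes not in $\rho(G_m)$, one of them odd (possible since only finitely many primes are excluded), and apply Operation D to obtain $G_{m+1}=G_m\times H$ with $H$ as above. By the join description, $\Delta(G_{m+1})$ is the join of $K_{2\times m}$ with $\overline{K_2}$, which is $K_{2\times(m+1)}$; and $G_{m+1}$ is solvable as a direct product of solvable groups. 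After $m-1$ iterations starting from $H_1$ we realize $K_{2\times m}$ for every $m\ge 2$, i.e.\ the unique $(n-2)$-regular graph for every even $n\ge 4$. Together with Theorem~\ref{2.4.1}, this shows that among non-complete regular graphs, being $(n-2)$-regular is exactly the realizability condition.

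The main obstacle is conceptual rather than computational: the whole argument hinges on recognizing that an $(n-2)$-regular graph is forced to be the complement of a perfect matching, and then on matching the ``add a part of size two'' operation on cocktail-party graphs with the direct-product (join) operation on character degree graphs via Theorem~\ref{2.6} and Operation D. Once these identifications are made, the realization is immediate, and the only routine points to check are the disjointness of the prime sets at each stage and the availability of an odd prime for Theorem~\ref{2.6}, both of which hold because infinitely many primes are available.
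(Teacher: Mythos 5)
Your proposal is correct, and its engine is the same as the paper's: realize the graph by repeatedly taking a direct product with a group whose character degree graph is two isolated vertices (Theorem~\ref{2.6} with a single $q_1$, i.e.\ Operation D of Remark~\ref{rem2.6}), observing via Definition~\ref{def2.8} that the direct product acts as the graph join. The paper runs exactly this induction, stepping from an $(n-4)$-regular graph on $n-2$ vertices to an $(n-2)$-regular graph on $n$ vertices, with base cases $n=4$ and $n=6$ cited from Lewis and from Bissler's six-vertex classification. Where you genuinely add something is the opening observation that the complement of an $(n-2)$-regular graph on $n$ vertices is $1$-regular, hence a perfect matching, so that up to isomorphism there is only \emph{one} such graph, namely the cocktail-party graph $K_{2\times m}$ with $m=n/2$. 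The paper never makes this uniqueness explicit, yet it is precisely what is needed to pass from ``the construction produces \emph{some} $(n-2)$-regular character degree graph on $n$ vertices'' to the stated claim that \emph{every} $(n-2)$-regular graph on $n$ vertices is realized; your version closes that (admittedly easy) logical gap. A second, smaller difference is that your base case is self-contained: you start the induction at $\overline{K_2}$ itself, obtained directly from Theorem~\ref{2.6}, and recover the $n=4$ case ($C_4=K_{2\times 2}$) by one application of the join, rather than quoting the four- and six-vertex classifications. Both routes are valid; yours is slightly more economical in its dependencies and more careful about why the target graph is unique.
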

\begin{proof} Let us prove the result using the method of induction on the number of vertices of $\Delta(G).$ 

\textbf{Case 1.} Let $n=4.$  By assumption $\Delta(G)$ is the  square graph on four vertices and it is the character degree graph as shown in \cite[pp.184]{Lewis5}.

\textbf{Case 2.} Let $n=6.$ Now $\Delta(G)$ is a $4$-regular graph with six vertices and is the character degree graph as shown in \cite[pp. 502-503]{BL}.

\begin{figure}[ht]
    \centering 
     \begin{subfigure}{.5\textwidth} 
         \centering 
         \begin{tikzpicture} 
           \draw[fill=black] (2,3) circle (3pt); 
\draw[fill=black] (0.5,2) circle (3pt);
 \draw[fill=black] (0.5,0) circle (3pt);
 \draw[fill=black] (2,-1) circle (3pt);
 \draw[fill=black] (3.5,0) circle (3pt);
 \draw[fill=black] (3.5,2) circle (3pt);
\draw[thick] (2,3) --(0.5,2)--(0.5,0)--(2,-1)--(3.5,0)--(3.5,2)--(2,3);
\draw[thick] (2,3) --(0.5,0)--(3.5,0)--(2,3);
\draw[thick] (0.5,2) --(2,-1)--(3.5,2)--(0.5,2);   
         \end{tikzpicture} 
         \caption{$\Delta(H)$} 
     \end{subfigure}
     \begin{subfigure}{.5\textwidth} 
         \centering 
         \begin{tikzpicture} 
             \draw[fill=black] (3.5,0) circle (3pt); 
\draw[fill=black] (5.5,0) circle (3pt);
         \end{tikzpicture} 
         \caption{$\Delta(K)$} 
     \end{subfigure} 
\begin{subfigure}{.5\textwidth} 
         \centering 
         \begin{tikzpicture} 
\draw[fill=black] (-0.5,0) circle (3pt);
 \draw[fill=black] (4.5,0) circle (3pt);
 \draw[fill=black] (0.5,1.5) circle (3pt);
 \draw[fill=black] (3.5,1.5) circle (3pt);
 \draw[fill=black] (2,2.5) circle (3pt);
 \draw[fill=black] (0.5,-1.5) circle (3pt);
 \draw[fill=black] (2,-2.5) circle (3pt);
\draw[fill=black] (3.5,-1.5) circle (3pt);
\draw[thick] (2,2.5) --(0.5,1.5)--(-0.5,0)--(0.5,-1.5)--(2,-2.5)--(3.5,-1.5)--(4.5,0)--(3.5,1.5)--(2,2.5);
\draw[thick] (0.5,1.5) --(3.5,1.5)--(2,-2.5)--(0.5,1.5);
\draw[thick] (0.5,-1.5) --(2,2.5)--(3.5,-1.5)--(0.5,-1.5);   
\draw[thick] (0.5,1.5) --(0.5,-1.5)--(3.5,-1.5)--(3.5,1.5);
\draw[thick] (-0.5,0) --(3.5,1.5)--(3.5,-1.5)--(-0.5,0); 
\draw[thick] (-0.5,0) --(2,2.5)--(4.5,0)--(2,-2.5)--(-0.5,0);
\draw[thick] (4.5,0) --(0.5,1.5)--(0.5,-1.5)--(4.5,0);
         \end{tikzpicture} 
         \caption{ $\Delta($H$ \times $K$)$ } 
     \end{subfigure}
\caption{ Direct product of character degree graphs } 
\end{figure}
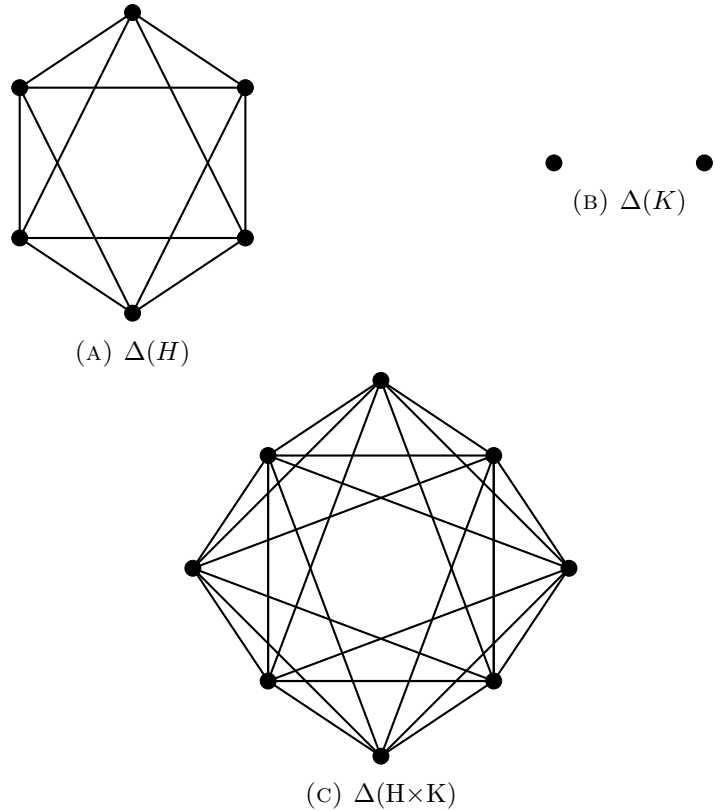 

\textbf{Case 3.} Let $n=8.$  Using direct product construction,  we can produce a $6$-regular graph with $8$ vertices.  For, consider $\Delta(H)$ as given in Figure 3(A) and it is a $4$-regular graph with six vertices.  Choose two distinct primes $p, q$ so that $p$ is odd and disjoint from $\rho(H)$.  By Theorem~\ref{2.6},  then there is a solvable group $K$ of Fitting height $2$ so that $\Delta(K)$ has two connected components: $\{p\}$ and $\{q\}$.  Now  $\Delta(K)$ is  as  in Figure 3(B).  Consider the direct product $\Delta(H\times K)$ as given in Definition~\ref{def2.8}. Then the direct product is nothing but the graph given in Figure 3(C). Hence $\Delta(H \times K)$ is a 6-regular graph with 8 vertices as required.

\textbf{Case 4.} Let $n=10.$ Let $\Delta(H)$ be a 6-regular graph with 8 vertices and let $\Delta(K)$ be the graph with two isolated vertices which are disjoint from $\rho(H)$. Now using direct product as in Case 3, $\Delta(H\times K)$ is a 8-regular graph with 10 vertices.
 
Assume that as induction hypothesis, there exists a $n-4$ regular graph with $n-2$ vertices. Let $\Delta(H)$ be this $n-4$ regular graph with $n-2$ vertices and $\Delta(K)$ be the graph with two isolated vertices which are disjoint from $\rho(H)$. Then $\Delta(H \times K)$ is a $n-2$ regular graph with $n$ vertices for the solvable group $H\times K.$ \hfill $\square$ 
\end{proof}

\section{Number of Eulerian Character Degree Graphs}
In this section, we obtain the number of Eulerian character degree graphs in terms of number of vertices. Actually, we obtain below that the number of character degree graphs with $n$ vertices ($n\geq 6$  and $n$ is even) which are non regular Eulerian   by assuming that the diameter of $\Delta(G)$ is two. Let $\lfloor x \rfloor$ be the greatest integer not exceeding the real number $x.$
 
\begin{thm} \label{4.1} Let $G$ be a finite solvable group.  Let $\mathcal{G}_n$ be the class of all $\Delta(G)$ where $\Delta(G)$ is a non-regular block with diameter $2$ containing $|\rho(G)|=n\geq 6$ vertices and $n$ is even. The number of Eulerian character degree graphs in $\mathcal{G}_n$ is at least $\lfloor \frac{n-4}{2} \rfloor$ $+$ $(\lfloor  \frac{n}{6} \rfloor-1).$ 
\end{thm}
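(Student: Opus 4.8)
The plan is to exhibit at least the claimed number of pairwise non-isomorphic members of $\mathcal{G}_n$, each built explicitly as an iterated direct product (Definition~\ref{def2.8}) of small character degree graphs, and then to check directly that each is a non-regular Eulerian block of diameter $2$. The engine is the observation that the direct product $\Delta(A\times B)$ of Definition~\ref{def2.8} is exactly the graph join of $\Delta(A)$ and $\Delta(B)$: every vertex of $\rho(A)$ is joined to every vertex of $\rho(B)$. Three features make this operation ideal. First, a join of two graphs each having at least two vertices is $2$-connected, hence a block, and is never complete once a factor has a non-edge, so it has diameter exactly $2$; thus the ``block'' and ``diameter $2$'' conditions are automatic. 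Second, in $\Delta(A\times B)$ a vertex $v\in\rho(A)$ has degree $\deg_{\Delta(A)}(v)+|\rho(B)|$, so the Euler condition (all degrees even, \cite[Theorem 3.5]{BM}) reduces to a parity count controlled by the sizes of the factors. Third, by Theorem~\ref{2.6} the disconnected graph $K_1\cup K_m$ (an isolated vertex together with a clique) is the character degree graph of a solvable group, and joining with two isolated vertices (Operation D, Remark~\ref{rem2.6}) again yields a character degree graph, so everything assembled this way is realizable.

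For the first family I would take
\[\Gamma_{m,k}=(K_1\cup K_m)\vee K_{2\times k},\qquad m=n-1-2k,\]
where $K_{2\times k}$ is the complete $k$-partite graph with all parts of size $2$, obtained by applying Operation~D exactly $k$ times to the core $K_1\cup K_m$ furnished by Theorem~\ref{2.6}, so that $\Gamma_{m,k}$ has $n$ vertices. Since $n$ is even, $m$ is automatically odd; a short degree count shows that the isolated vertex of the core has degree $2k$ while all remaining $n-1$ vertices have degree $n-2$, so every degree is even and $\Gamma_{m,k}$ is Eulerian, and it is non-regular exactly when $m\geq 3$. The condition $m\geq 3$ forces $1\leq k\leq \frac{n-4}{2}$, giving $\lfloor\frac{n-4}{2}\rfloor$ admissible values of $k$. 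Because the single low-degree vertex has degree $2k$, distinct values of $k$ give distinct degree sequences and hence non-isomorphic graphs; this yields the first summand.

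For the second family I would replace the single isolated ``defect'' by a $6$-vertex building block drawn from Bisler's classification \cite{BL} of character degree graphs on six vertices (for instance the octahedron $K_{2,2,2}$ of Figure 3(A), or $\overline{K_{3,3}}=K_3\cup K_3$, whose complement is triangle-free). Joining such a block, together with a suitable cocktail-party factor, produces Eulerian blocks of diameter $2$ whose degree sequences (or whose local structure around the low-degree vertices, when the degree sequences happen to coincide) differ from those of the first family. Counting how many such $6$-vertex blocks fit inside $n$ vertices is what produces the $\lfloor\frac{n}{6}\rfloor-1$ term, and realizability is guaranteed by Definition~\ref{def2.8} exactly as in Cases 1--4 of the proof of Theorem~\ref{3.5}.

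Finally I would assemble the bound by verifying that the two families are internally and mutually non-isomorphic, tracked cleanly through degree sequences. I expect the main obstacle to be precisely this counting step: one must (i) confirm that each proposed $6$-vertex core is genuinely a character degree graph of a solvable group rather than merely P\'alfy-admissible (Remark~\ref{rem2.1}), which is where Bisler's classification is indispensable, and (ii) prove that no member of the second family coincides with one of the first and that the asserted number of them are mutually non-isomorphic, while discarding the degenerate parameter values (such as $m=1$) that collapse to the regular cocktail-party graph and hence leave $\mathcal{G}_n$. By contrast, the parity, block, and diameter checks are routine consequences of the join description above and of Theorem~\ref{2.2}.
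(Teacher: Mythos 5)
Your first family is exactly the paper's first family, and you have it right: the paper builds, for each $k$ with $1\le k\le \frac{n-4}{2}$, the graph $(K_1\cup K_m)\vee K_{2\times k}$ with $m=n-1-2k\ge 3$ odd, realized by Theorem~\ref{2.6} followed by $k$ applications of Operation~D, and distinguishes the resulting graphs by the degree $2k$ of the unique low-degree vertex. Your parity, block, diameter and non-regularity checks for this family are correct and complete, and they account for the summand $\lfloor\frac{n-4}{2}\rfloor$.

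The gap is in the second family, i.e.\ the summand $\lfloor\frac{n}{6}\rfloor-1$, and it is not merely the counting step you defer: both six-vertex cores you name are unusable. Joining the octahedron $K_{2,2,2}$ of Figure~3(A) with cocktail-party factors yields $K_{2\times(3+k)}$, which is \emph{regular} and hence excluded from $\mathcal{G}_n$ by definition. The graph $K_3\cup K_3$ is not supplied by anything in the paper's toolkit (Theorem~\ref{2.6} only realizes $K_1\cup K_m$), and in fact it is not the degree graph of any solvable group: for solvable groups a disconnected $\Delta(G)$ with components of sizes $a\le b$ must satisfy $b\ge 2^a-1$, which $K_3\cup K_3$ violates. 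The paper's actual construction is different and elementary: the second family consists, for each $j=2,\dots,\lfloor n/6\rfloor$, of the $j$-fold join of copies of the six-vertex graph of Figure~4 (itself $(K_1\cup K_3)\vee \overline{K_2}$, hence realizable), followed by $(n-6j)/2$ further applications of Operation~D. This produces a graph with exactly $j$ vertices of degree $n-4$ and $n-j$ vertices of degree $n-2$, so the members of the second family are pairwise non-isomorphic and, since $j\ge 2$, none coincides with a member of the first family (which has a single vertex of degree $2k\le n-4$). Without identifying this specific iterated-join construction, your argument does not deliver the term $\lfloor\frac{n}{6}\rfloor-1$.
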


\begin{proof}  We prove the result by identifying required number of Eulerian character degree graphs in $\mathcal{G}_n.$  

\textbf{Step 1.} First let us find Eulerian character degree graphs in the class $\mathcal{G}_n$ where $6\leq n \leq 12$ and $n$ is even. In this proof all the  character degree graphs are constructed via direct product via repeated application Operation D\ref{rem2.6} .

\textbf{Case 1.1} $|\rho(G)| = 6.$ M. Bissler~\cite[pp. 503] {BL} classified all character degree graphs with $6$ vertices expect $9$ graphs  listed in \cite[pp. 509]{BL}. But none of these $9$ graphs are Eulerian.  On the other hand, among  twelve graphs, there exists only one non regular graph with all the vertices as even and the same is given in Figure 4.  Thus there exists only one graph in $\mathcal{G}_6$ and hence the number of graphs in  $\mathcal{G}_6$ is at least one and is equal to $\lfloor \frac{6-4}{2} \rfloor+(\lfloor  \frac{6}{6} \rfloor-1).$ 
 
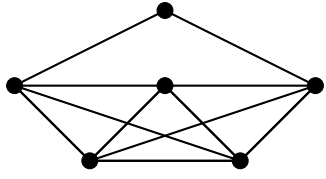
\begin{figure}[ht]
\centering
\begin{tikzpicture}
\draw[fill=black] (-1,0) circle (3pt);
\draw[fill=black] (1,0) circle (3pt);
 \draw[fill=black] (0,1) circle (3pt);
\draw[fill=black] (-2,1) circle (3pt);
\draw[fill=black] (2,1) circle (3pt);
 \draw[fill=black] (0,2) circle (3pt);
\draw[thick] (0,2) --(-2,1)--(-1,0) --(1,0)--(2,1) --(0,2);
\draw[thick] (-2,1) --(-1,0) --(0,1) --(-2,1);
\draw[thick] (0,1) --(1,0) --(2,1) --(0,1);
\draw[thick] (-1,0) --(1,0) --(0,1) --(-1,0);
\draw[thick] (-1,0) --(2,1) ;
\draw[thick] (-2,1) --(1,0) ;
\end{tikzpicture}
 \caption{Graph with six vertices}
\end{figure}

\textbf{Case 1.2} $|\rho(G)| = 8.$ Let $\Delta(G)$ be the Eulerian character degree graph given in Figure 4. After applying Operation D~\ref{rem2.6}, we get a direct product $\Delta(G\times H)$  with $8$ vertices in which $1$ vertex is of degree $4$ and the remaining $7$  vertices are of degree $6$. Hence this direct product gives a graph in $\mathcal{G}_8.$

Again by Theorem~\ref{2.6}, one can have a $\Delta(G)$ with has two complete connected components: one having an isolated vertex and the other is a complete graph on $5$ vertices corresponding to $6$ different primes.  Now after applying  Operation D~\ref{rem2.6}, we get a direct product $\Delta(G\times H)$  with $8$ vertices in which $1$ vertex is of degree $2$ and the remaining $7$  vertices are of degree $6$. Note that this gives another graph in $\mathcal{G}_8.$ 

Thus the  number of graphs in $\mathcal{G}_8$ at least $2$ and it is equal to $\lfloor \frac{8-4}{2} \rfloor+(\lfloor  \frac{8}{6} \rfloor-1).$  

\textbf{Case 1.3} $|\rho(G)| = 10.$ There are two Eulerian character degree graphs in $\mathcal{G}_8$ with $8$ vertices as  constructed in Case $1.2$.  Let us take each of these graphs as $\Delta(G)$.  After applying Operation D~\ref{rem2.6}, we get two direct products $\Delta(G\times H)$  with $10$ vertices. In the first graph, one vertex is of degree $4$ and the remaining $9$ vertices are of  degree $8$.  In the second graph, one vertex is of degree $6$ and the remaining $9$ vertices are of degree $8$. 
Thus we get two graphs in $\mathcal{G}_{10}.$
 
Again by Theorem~\ref{2.6}, one can have a  $\Delta(G)$ which has two complete connected components: one having an isolated vertex and the other is a complete graph on $7$ vertices corresponding to $8$ different primes.  Now after applying  Operation D~\ref{rem2.6}, we get a direct product $\Delta(G\times H)$  with $10$ vertices in which $1$ vertex is of degree $2$ and the remaining $9$  vertices are of degree $8$. Note that this gives another graph in $\mathcal{G}_{10}.$ 

Thus the  number of graphs in $\mathcal{G}_{10}$ is at least $3$ and it is equal to $\lfloor \frac{10-4}{2} \rfloor+(\lfloor  \frac{10}{6} \rfloor-1).$

\textbf{Case 1.4} $|\rho(G)| = 12.$ There are three Eulerian character degree graphs  in $\mathcal{G}_{10}$ with $10$  as constructed in Case 1.3.  Let us take each of these graphs as  $\Delta(G)$. After applying Operation D~\ref{rem2.6}, we get three direct products $\Delta(G\times H)$  with $12$ vertices.  In the first graph, one vertex is of  degree $4$ and the remaining $11$  vertices are of degree $10$.  In the second graph, one vertex is of  degree $6$ and the remaining $11$ vertices are of  degree $10$. In the third graph, one vertex is of degree $8$ and $11$ vertices are of degree $10$.  Thus we get three graphs in $\mathcal{G}_{12}.$
 
Again consider the six vertex graph given in Figure 4. Let us take this as $\Delta(G).$  It is the direct product of two graphs given in Figure 5.
 
\begin{figure}[ht]
    \centering 
     \begin{subfigure}{.5\textwidth} 
         \centering 
         \begin{tikzpicture} 
           \draw[fill=black] (0,4) circle (3pt); 
\draw[fill=black] (-2,0) circle (3pt);
 \draw[fill=black] (2,0) circle (3pt);
 \draw[fill=black] (0,2) circle (3pt);
\draw[thick] (-2,0) --(2,0)--(0,2)--(-2,0);  
         \end{tikzpicture} 
         \caption{$\Delta(G_1)$} 
     \end{subfigure}
     \begin{subfigure}{.5\textwidth} 
         \centering 
         \begin{tikzpicture} 
             \draw[fill=black] (3.5,0) circle (3pt); 
\draw[fill=black] (5.5,0) circle (3pt);
         \end{tikzpicture} 
         \caption{$\Delta(G_2)$} 
\end{subfigure} 
\caption{$\Delta(G_1\times G_2)$}
\end{figure}
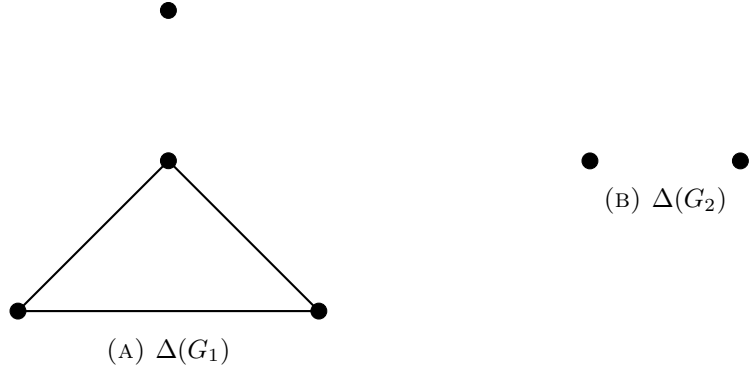 
 
Choose $6$ distinct prime numbers which are disjoint from $\rho(G)$. These primes shall give a graph $\Delta(H)$ as in Figure 4. The direct product of these graphs shall give $\Delta(G \times H)$ with $12$ vertices in which $2$ vertices are of  degree $8$ and the remaining $10$ vertices are of  degree $10.$ This is our fourth graph in  $\mathcal{G}_{12}.$

Now by Theorem~\ref{2.6}, one can find  a character degree graph  with two complete connected components: one having an isolated vertex and the other having $9$ vertices.  Taking this as $\Delta(G)$ and applying Operation~\ref{rem2.6}, we get a direct product $\Delta(G\times H)$ with $12$ vertices in which $1$ vertex is of  degree $2$ and $10$  vertices are of degree $10$. Thus we get one more graph in $\mathcal{G}_{12}.$

Thus the  number of graphs in $\mathcal{G}_{12}$ is at least $5$ and it is equal to $\lfloor \frac{12-4}{2} \rfloor+(\lfloor  \frac{12}{6} \rfloor-1).$  

\textbf{Step 2.} Let $n=6r+k$ where $k= 0, 2, 4.$ $r\geq 2$  and $n$ is even. Assume that as induction hypothesis the result is true for $n=6(r-1)+k$  where $k= 0, 2, 4.$ We shall prove the result through induction by considering the following possibilities and they cover all $n\geq 14$ and $n$ is even.

\textbf{Case 2.1} $r \geq 3$  and $k=2.$

By induction hypothesis, the result is true for the class $\mathcal{G}_{6(r-1)}$ and we have at least  $\lfloor \frac{6(r-1)-4}{2} \rfloor+(\lfloor  \frac{6(r-1)}{6} \rfloor-1)=4r-7$ each with  $6(r-1)$ vertices. Let us take each of these graphs as  $\Delta(G)$. After applying Operation D~\ref{rem2.6}, on each of these, we get  $4r-7$ direct products  $\Delta(G \times H)$ with $6(r-1)+2$ vertices in $\mathcal{G}_{6(r-1)+2}.$ 

Now by Theorem~\ref{2.6}, one can find  a character degree graph with two complete connected components: one having an isolated vertex and the other having $6r-7$ vertices.  Applying Operation D~\ref{rem2.6} on this, we get a  direct product  $\Delta(G \times H)$ with $6(r-1)+2$ vertices in which $1$ is of degree $2$ and  remaining $6(r-1)+1$  vertices are of degree $6(r-1)$.

Hence  the number of graphs in  $\mathcal{G}_{6(r-1)+2}$  is at least $4r-6$ and it is equal to  $\lfloor \frac{6(r-1)+2-4}{2} \rfloor+(\lfloor  \frac{6(r-1)+2}{6} \rfloor-1).$ This proves the result for $n=14, 20, 26, \ldots.$

\textbf{Case 2.2} $r \geq 3$  and $k=4.$

By induction hypothesis, the result is true for the class $\mathcal{G}_{6(r-1)+2}$ and we have at least $4r-6$ in $\mathcal{G}_{6(r-1)+2}$ as proved in Case 2.1. Let us take each of these graphs as  $\Delta(G).$ After applying Operation D~\ref{rem2.6}, on each of these, we get  $4r-6$ direct products  $\Delta(G \times H)$ with $6(r-1)+4$ vertices in $\mathcal{G}_{6(r-1)+4}.$ 

As done in the previous cases, let  $\Delta(G)$  be  a character degree graph  with two complete connected components: one having an isolated vertex and the other having $6r-5$ vertices.  After applying Operation D~\ref{rem2.6},  we get a direct product,  $\Delta(G \times H)$ with $6(r-1)+4$ vertices in which $1$ vertex is of  degree $2$ and  the remaining $6(r-1)+3$  vertices are of  degree $6(r-1)+2$. This along with previous construction gives that the number of graphs  in $\mathcal{G}_{6(r-1)+4}$ is at least $4r-5$ and it is equal to  $\lfloor \frac{6(r-1)+4-4}{2} \rfloor+(\lfloor  \frac{6(r-1)+4}{6} \rfloor-1).$ This proves the result for  $n=16, 22,28,\ldots.$

\textbf{Case 2.3} $r \geq 3$  and $k=0.$

By induction hypothesis, the result is true for the class $\mathcal{G}_{6(r-1)+4}$ and we have at least $4r-5$ in $\mathcal{G}_{6(r-1)+4}$ as proved in Case 2.2. Let us take each of these graphs as  $\Delta(G).$ After applying Operation D~\ref{rem2.6}, on each of these, we get  $4r-5$ direct products  $\Delta(G \times H)$ with $6r$ vertices in $\mathcal{G}_{6r}.$ 

As done in the previous cases, let  $\Delta(G)$  be  a character degree graph  with two complete connected components: one having an isolated vertex and the other having $6r-3$ vertices. After applying Operation D~\ref{rem2.6},  we get a direct product,  $\Delta(G \times H)$ with $6r$ vertices in which $1$ vertex is of  degree $2$ and  the remaining $6r-1$  vertices are of  degree $6r-2$. 
This along with previous construction gives that the number of graphs  in $\mathcal{G}_{6r}$ is at least $4r-4.$ 

As proved in Case $1.4,$ we have a character degree graph with $12$ vertices in which $2$ vertices has degree $8$ and $10$ vertices has degree $10.$  Take this $\Delta(G).$  Choose $6$ distinct prime numbers which are not in $\rho(G)$. This gives  character degree graph $\Delta(H)$  as in Figure 5. Now, we have a direct product $\Delta(G \times H)$ with $18$ vertices in which $3$ vertices are of  degree $14$ and the remaining $15$ vertices are of  degree $16.$   Take this as $\Delta(G).$  Again choose $6$ distinct prime numbers which are not in $\rho(G)$ let the character degree graph corresponding to these primes be $\Delta(H)$ (as in Figure 5.) Again, we get a direct product $\Delta(G \times H)$ with $24$ vertices in which $4$ vertices are of  degree $20$ and $20$ vertices are of  degree $22.$ By repeating this process for  $r$ times,  we get a character degree graph with $6r$ vertices in which $r$ vertices are of degree $6r-4$ and $5r$ vertices are of degree  $6r-2.$

Thus the number of graphs  in $\mathcal{G}_{6r}$ is at least $4r-3$  and it is equal to  $\lfloor \frac{6r-4}{2} \rfloor+(\lfloor  \frac{6r+4}{6} \rfloor-1).$ This proves the result for  $n=18, 24,30,\ldots.$
\end{proof}

\section*{Acknowledgment}
This research work is partially supported by DST-FIST (Letter No: SR/FST /MSI- 115 / 2016 dated 10.10.2017) through C. Selvaraj. Further, this research work is supported by CSIR Emeritus Scientist Scheme (No. 21 (1123)/20/EMR-II) of  Council of Scientific and Industrial Research, Government of India through T. Tamizh Chelvam.


\begin{thebibliography}{999}
\bibitem{BM}J.A. Bondy and U.S.R. Murty, \textit{Graph Theory}, Grad. Texts in Math. 244, Springer, New York, 2008.
\bibitem{BL} M. Bissler, J. Laubacher and M.L. Lewis, Classifying character degree graphs with six vertices, \textit{Beitr. Algebra Geom.} {\bf 60}(2019), 499--511.
\bibitem{EI} M. Ebrahimi, A. Iranmanesh and M. A. Hosseinzadeh, Hamiltonian character graphs, \textit{J. Algebra} {\bf 428} (6) (2015), 54--66.
\bibitem{EII} M. Ebrahimi and A. Iranmanesh, Coloring of character graphs, \textit{Comm. Algebra} {\bf 45:1}(2017), 227--233.
\bibitem{HU} B. Huppert , \textit{Research in representation theory at Mainz (1984-1990)}, Progr. Math. {\bf 95}(1991), 17--36.
\bibitem{Lewis2} M.L. Lewis, Solvable groups with character degree graphs having 5 vertices and diameter 3, \textit{Comm. Algebra} {\bf 30} (2002), 5485--5503.
\bibitem{Lewis4} M.L. Lewis, \textit{Character degree graphs of solvable groups of fitting height 2,}  \textit{Canad.Math. Bull.} {\bf 49}(1) (2006), 127--133.
\bibitem{Lewis5} M.L. Lewis, An overview of graphs associated with character degrees and conjugacy class sizes in finite groups, \textit{Rocky Mountain J. Math.} {\bf 38} (1) (2008), 175--211.
\bibitem{Lewis6} M.L. Lewis and Q. Meng, Solvable groups whose prime divisor character degree graphs are 1-connected, \textit{Monatsh. Math.} {\bf 190} (2019),  541--548. 
\bibitem{Manz1} O. Manz, Degree problems II: $\pi$-separable character degrees, \textit{Comm. Algebra} {\bf 13} (1985), 2421--2431.
\bibitem{Manz2} O. Manz, R. Staszewski and W. Willems, On the number of components of a graph related to character degrees, \textit{Proc. AMS} {\bf 103} (1) (1988), 31--37.
\bibitem{Manz3} O. Manz, W. Willems and T.R. Wolf, The diameter of the character degree graph, \textit{J. Reine Angew. Math.}  {\bf 402} (1989), 181--198.
\bibitem{MZ} C.P. Morresi Zuccari, Regular character degree graphs, \textit{J. Algebra} {\bf 411}(2014), 215--224.
\bibitem{PP} P.P. P\'{a}lfy, On the character degree graph of solvable groups I: three primes, \textit{Period. Math. Hungar.}  {\bf 36} (1) (1998), 61-65.
\bibitem{Zha} J. Zhang, On a problem by Huppert, \textit{Bejing Daxue Xuebao Ziran Kexue Ban}, {\bf 34 (2-3)} (1998) ,
143--150.
\end{thebibliography}
\end{document}